\newtheorem{theorem}{Theorem}[section]
\newtheorem{proposition}{Proposition}[section]
\newtheorem{lemma}{Lemma}[section]
\newtheorem{definition}{Definition}[section]
\newtheorem{remark}{Remark}[section]
\newtheorem{claim}{Claim}
\newcommand{\R}{\mathbb{R}}
\newcommand{\h}{\mathbb{H}}
\newcommand{\s}{\mathbb{S}}
\newcommand{\al}{\alpha}
\newcommand{\ria}{\rightarrow}
\newcommand{\n}{\nabla}
\newcommand{\ran}{\rangle}
\newcommand{\lan}{\langle}
\DeclareMathOperator{\hess}{Hess}
\DeclareMathOperator{\ric}{Ric}
\DeclareMathOperator{\di}{div}
\DeclareMathOperator{\tr}{tr}
\DeclareMathOperator{\Rm}{ \textrm{Rm}}
\numberwithin{equation}{section}
\begin{document}
\title{Eigenvalue estimates for a class of elliptic differential operators on compact manifolds}
\author{Hil\'ario Alencar\footnote{Hil\'ario Alencar was partially supported by CNPq and Fapeal of Brazil.}, Greg\'orio Silva Neto, Detang Zhou \footnote{Detang Zhou was partially supported by CNPq and Faperj of Brazil.}}
\date{March 24, 2015}


\maketitle







\maketitle
\baselineskip=16pt

\begin{abstract}
The motivation of this paper is to study a second order elliptic operator which appears naturally in Riemannian geometry, for instance in the study of hypersurfaces with constant $r$-mean curvature. We  prove a  generalized Bochner-type formula for such a kind of operators and as applications we obtain some sharp estimates for the first  nonzero eigenvalues in two special cases. These results can be considered as generalizations of  the Lichnerowicz-Obata Theorem.

 {\bf Keywords:} Riemannian manifolds. first eigenvalue,  elliptic operator,  Bochner formula\\
{\bf Mathematics Subject Classification: 53C42}
\end{abstract}
\section{Introduction}

Let $\{\omega_1,\ldots,\omega_n\}$ be a local coframe field defined on a Riemannian manifold $(M,g).$  For  a symmetric tensor 
$\phi=\sum_{i,j=1}^n\phi_{ij} \omega_i\otimes\omega_j$   on $M,$
 Cheng and Yau, see \cite{yau}, define an operator associated to $\phi$ by
\begin{equation}\label{defi.square}
\square f = \sum_{i,j=1}^n\phi_{ij}f_{ij}.
\end{equation}

In this paper, we prove  the following  new Bochner type formula.
\begin{theorem}\label{teo.boch-intro} Let  $M^n$ be a Riemannian manifold and  $\displaystyle{\phi=\sum_{i,j=1}^n\phi_{ij}\omega_i\otimes\omega_j}$ be a symmetric tensor defined on $M$. Then, for any smooth function $f:M\ria\R$, and for any $c\in\R,$
\begin{equation}\label{theo.bochner}
\begin{split}
\dfrac{1}{2}\square(|\n f|^2)&=\lan\n f,\n(\square f)\ran + \lan \phi (\n f), \n(\Delta f)\ran + 2\sum_{i,j,k=1}^n\phi_{ij}f_{jk}f_{ki}+ 2\sum_{i,j,k,m=1}^nf_if_j\phi_{im}R_{mkjk}\\
&\qquad  + c\sum_{i,j=1}^n(\tr\phi)_{ij}f_if_j - \sum_{i,j=1}^nf_if_j\Delta\phi_{ij} + \sum_{i,j=1}^nf_if_j\left(\sum_{k=1}^n\phi_{ikk} - c\sum_{k=1}^n\phi_{kki}\right)_j\\
&\qquad +\sum_{k=1}^n\left(\sum_{i,j=1}^n f_if_j(\phi_{jik} - \phi_{jki})\right)_k - \sum_{k=1}^n\left(\sum_{i,j=1}^nf_j\phi_{ij}f_{ik}\right)_k.\\
\end{split}
\end{equation}
\end{theorem}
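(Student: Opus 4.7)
Fix a point $p\in M$ and choose a local orthonormal frame $\{e_1,\ldots,e_n\}$ that is normal at $p$, so Christoffel symbols vanish there. Since every quantity appearing in the claimed identity is tensorial, verifying the formula at an arbitrary such $p$ suffices, and this normal-frame setup lets me replace ordinary by covariant derivatives throughout without Christoffel corrections.

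The starting point is a direct differentiation. From $|\nabla f|^2=\sum_k f_k^2$ one obtains $(|\nabla f|^2)_{ij}=2\sum_k(f_{ki}f_{kj}+f_k f_{kij})$, and hence
\[
\tfrac{1}{2}\square(|\nabla f|^2)=\sum_{i,j,k}\phi_{ij}f_{ki}f_{kj}+\sum_{i,j,k}\phi_{ij}f_k f_{kij}.
\]
The Hessian-squared piece $\sum\phi_{ij}f_{ki}f_{kj}$ will match the term $2\sum\phi_{ij}f_{jk}f_{ki}$ in the statement once one subtracts the quadratic-Hessian contribution that comes out of expanding the final divergence $-\sum_k(\sum_{i,j}f_j\phi_{ij}f_{ik})_k$ on the RHS by the product rule. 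That same expansion also spits out $-\sum\phi_{ij,k}f_jf_{ik}$ and $-\sum\phi_{ij}f_j f_{ikk}$, which are to be absorbed by later steps.

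The cubic sum $\sum\phi_{ij}f_k f_{kij}$ is the heart of the proof. I apply the Ricci identity for commuting covariant derivatives on the $1$-form $df$: by Hessian symmetry $f_{kij}=f_{ikj}$, and commutation gives $f_{ikj}=f_{ijk}+\sum_m f_m R^m{}_{ikj}$. The ``clean'' part $\sum\phi_{ij}f_kf_{ijk}$ produces $\lan\nabla f,\nabla(\square f)\ran$ via $(\square f)_k=\sum(\phi_{ij,k}f_{ij}+\phi_{ij}f_{ijk})$, namely
\[
\sum\phi_{ij}f_kf_{ijk}=\lan\nabla f,\nabla(\square f)\ran-\sum\phi_{ij,k}f_kf_{ij}.
\]
The residual $-\sum\phi_{ij,k}f_k f_{ij}$ is reworked by splitting $\phi_{ij,k}=\phi_{ik,j}+(\phi_{ij,k}-\phi_{ik,j})$ and repeatedly using the product rule; this exposes $-\sum f_if_j\Delta\phi_{ij}$, the divergence-of-$\phi$ term $\sum f_if_j(\sum_k\phi_{ikk})_j$, and the antisymmetric divergence $\sum_k(\sum f_if_j(\phi_{jik}-\phi_{jki}))_k$. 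A second invocation of the Ricci identity, now turning the leftover $f_{ikk}$ into $f_{kki}$ (the standard Bochner commutator $\Delta\nabla_if-\nabla_i\Delta f=\sum_m \mathrm{Ric}_{im}f_m$), produces exactly $\lan\phi(\nabla f),\nabla(\Delta f)\ran$ together with a Ricci-type correction; combining this with the curvature contribution from the first commutation and using the first Bianchi identity together with the pair symmetry $R_{mikj}=R_{kjmi}$ assembles them into the single term $2\sum f_if_j\phi_{im}R_{mkjk}$.

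Finally, the two $c$-dependent terms in the statement cancel identically: $(\tr\phi)_{ij}=((\tr\phi)_i)_j=(\sum_k\phi_{kki})_j$, since covariant derivatives commute on the scalar function $\tr\phi$. Adding zero in this way gives the flexibility in $c$, useful for later applications. The main obstacle is the bookkeeping in the third paragraph: several curvature terms arise from distinct commutations and must be reconciled via Bianchi, and the derivatives of $\phi$ must be split in precisely the Codazzi-type way anticipated by the statement; each individual move is elementary, but getting every sign and every symmetrization to land in the right group is the real labor of the proof.
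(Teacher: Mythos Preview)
Your approach is the same as the paper's: compute $\square(|\nabla f|^2)$ directly in a normal frame, commute third derivatives of $f$ via the Ricci identity to extract $\langle\nabla f,\nabla(\square f)\rangle$, perform a Codazzi-type split on the leftover $\nabla\phi$ term, integrate by parts to manufacture the divergence pieces, and commute $f_{ikk}\to f_{kki}$ to produce $\langle\phi(\nabla f),\nabla(\Delta f)\rangle$. Your observation that the two $c$-dependent terms cancel identically is correct and explains why the identity holds for every $c\in\R$; the paper instead imports $c$ through the Cheng--Yau identity for $\Delta\phi_{ij}$, which is a slightly more roundabout route to the same fact.

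There is, however, a real gap in your curvature bookkeeping. You record only two curvature contributions: the full-Riemann term from the first commutation $f_{ikj}\to f_{ijk}$, and the Ricci term from $f_{ikk}\to f_{kki}$. These two alone do \emph{not} assemble into $2\sum f_if_j\phi_{im}R_{mkjk}=2\ric(\nabla f,\phi(\nabla f))$, and neither the first Bianchi identity nor the pair symmetry will make them do so: the first is a genuine $\phi$--$\Rm$ contraction, not a Ricci contraction. The missing curvature comes from a \emph{third} Ricci identity, applied to the second covariant derivatives of $\phi$. Concretely, the combination of $-\sum f_if_j\Delta\phi_{ij}$, $\sum f_if_j(\sum_k\phi_{ikk})_j$, and the second-derivative part of the expanded antisymmetric divergence reduces (using $\phi_{ij}=\phi_{ji}$ and the $i\leftrightarrow j$ symmetry of $f_if_j$) to $\sum f_if_j(\phi_{ikkj}-\phi_{ikjk})$, a pure commutator yielding curvature terms $\sum f_if_j\phi_{mk}R_{mikj}$ and $\sum f_if_j\phi_{im}R_{mkjk}$. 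The first of these cancels the full-Riemann term from your first commutation on $f$; the second supplies the second copy of $\ric(\nabla f,\phi(\nabla f))$. This is exactly what the paper extracts via the Cheng--Yau formula for $\Delta\phi_{ij}$. Your phrase ``repeatedly using the product rule'' conceals this step: the product rule moves derivatives between $f$ and $\phi$, but it cannot reconcile the orderings $\phi_{ikkj}$ and $\phi_{ikjk}$ without invoking curvature.
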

\begin{remark}
If $\phi $ is equal to the metric $g$,    then    $ \sum_{k=1}^n\left(\sum_{i,j=1}^nf_j\phi_{ij}f_{ik}\right)_k=\dfrac{1}{2}\Delta|\n f|^2$,
and Theorem \ref{teo.boch-intro} is exactly the Bochner formula for the Laplacian
$$
\Delta|\n f|^2 = 2\lan\n f,\n(\Delta f)\ran + 2|\hess f|^2 + 2\ric(\n f,\n f).
$$
\end{remark}

\begin{remark}
Notice that the last two terms in (\ref{theo.bochner}) are in divergent form and thus their integrals vanish when the  manifold $M$ is compact. In applications we have some natural examples of $\phi$ satisfying $\sum_{k=1}^n\phi_{ikk} - c\sum_{k=1}^n\phi_{kki}=0$ for some constant $c$ (see Appendix) .   
\end{remark}

Of course, an application of the new Bochner formula is to recover the well-known Lich- nerowicz-Obata Theorem about the first eigenvalue for the Laplacian \cite{lich} and \cite{obata}.

{\bf Theorem.}
{\it Let $M$ be an $n$-dimensional compact Riemannian manifold with Ricci curvature bounded below by $(n-1)a^2$. Then the first nonzero eigenvalue $\lambda$ of the Laplacian acting on functions of $M$ satisfies $\lambda \ge na^2$  and the equality holds if and only if $M$ is isometric to the round sphere.}

Before we  state two more applications for second order differential operators, we discuss some known properties of $\square$.

Associated to tensor $\phi$ we have the $(1,1)$-tensor, still denoted by $\phi$, defined by
$$
\phi(X,Y)=\lan\phi(X),Y\ran, \forall X,Y\in TM.
$$

Here are two basic properties of the operator $\square$.

1) It follows from  Cheng and Yau (Proposition 1 in \cite{yau}) that
$$
\square f = \di(\phi(\n f)) - \sum_{i=1}^n\left(\sum_{j=1}^n\phi_{ijj}\right)f_i.
$$

2) We say that $\phi$ is \emph{divergence free} if $\di\phi\equiv0$ or, equivalently, $\displaystyle{\sum_{j=1}^n\phi_{ijj}\equiv0,}$ for all $1\leq j\leq n.$

If $M$ is compact, we know that $\square$ is self-adjoint if and only if $\phi$ is divergence free (see also \cite{yau}, Proposition $1$). If $\phi$ is symmetric and positive definite, then $\square$ is strictly elliptic. Therefore, if $\phi$ is divergence free, symmetric and positive definite, then $\square$ is a strictly elliptic and self-adjoint. Furthermore, the spectrum of $\square$ is discrete and it makes sense to consider eigenvalues, see for example \cite{Gilbarg-Trudinger}, p. $214$.

Now let us  explain the applications of Theorem \ref{teo.boch-intro} to get  estimates for the first eigenvalue for two types of operators $\square$ which arise naturally in Riemannian geometry and submanifold theory.

{\bf a)}  Let us denote by $\ric$ the \emph{Ricci tensor} of $M$. Namely 
$$
\ric(X,Y)=\sum_{i=1}^n\lan \Rm (X,e_i)Y,e_i\ran,
$$
where $\Rm(U,V)W=\n_V\n_U W - \n_U\n_V W + \n_{[U,V]}W$ is the curvature tensor of $M.$ The \emph{scalar curvature} $R$ of $M$ is defined by the trace of Ricci tensor.
We will also denote by $\ric$ the linear operator associated with the Ricci tensor, i.e.,
$
\ric(X,Y) = \lan\ric(X),Y\ran,
$
as well as its coordinates will be denoted by $\ric_{ij}.$
If $\{e_1,\ldots,e_n\}$ is an orthonormal frame, the components of curvature tensor of $M$ can be written by (see \cite{Besse}  p. 48)
\[
\begin{split}
R_{ijkl} &=\dfrac{1}{n-2}\left(\ric_{ik}g_{jl} - \ric_{il}g_{jk}+\ric_{jl}g_{ik} - \ric_{jk}g_{il}\right)\\
&- \dfrac{R}{(n-1)(n-2)}(g_{ik}g_{jl} - g_{il}g_{jk}) + W_{ijkl}.
\end{split}
\]
where $W_{ijkl}$ are the components of the \emph{Weyl tensor} $W.$ 

When $n \geq 3$, the components of  \emph{Schouten operator}  $S$ of $M$ is  defined by
$$
S_{ij}=\ric_{ij}-\frac{R}{2(n-1)}g_{ij}.
$$
In this case, one can rewrite the components of curvature tensor by
$$
R_{ijkl} =\dfrac{1}{n-2}\left(S_{ik}g_{jl} - S_{il}g_{jk} + S_{jl}g_{ik} - S_{jk}g_{il}\right) + W_{ijkl}.
$$
The operator $\square_S$ is defined by
$$
\square_S f = \sum_{i,j=1}^n S_{ij}f_{ij} = \sum_{i,j=1}^n\left(\ric_{ij} - \dfrac{R}{2(n-1)}g_{ij}\right)f_{ij}.
$$

We prove (see equation (\ref{eqn4.1}), p. \pageref{eqn4.1}) that $S$ is divergence free (or equivalently, $\square_S$ is self-adjoint) if and only if $M$ has constant scalar curvature.

\begin{definition}
A Riemannian manifold is called to have \emph{harmonic Weyl tensor} if $\di W\equiv0,$.
\end{definition}
In this case, the Schouten operator is a Codazzi operator, i.e., $S_{ijk}=S_{ikj}.$ Our first application of Theorem \ref{teo.boch-intro} is the following

\begin{theorem}\label{teo.eigen.S-intro}
Let $M^n,n\geq4$ be a compact Riemannian manifold which has harmonic Weyl tensor. If $M$ has constant scalar curvature $R$ and the Schouten tensor is positive definite, then the first nonzero eigenvalue $ \mu_1(\square_S,M) $ satisfies

\begin{equation}\label{eq.estim.S-2}
\mu_1(\square_S,M)\geq \dfrac{n-2}{2(n-1)}\left(\dfrac{R}{R-2L_0}\right)\left[L_0^2-\left(\dfrac{R}{2(n-1)}+K_0\right)L_0 + \dfrac{1}{2}K_0R\right],
\end{equation}
where
$K_0$ and $L_0$ are the lower bounds  of the sectional curvature and Ricci curvature of $M,$ respectively.
\\
Furthermore, the equality holds if and only if $M$ is the round sphere $\s^n(K_0).$
\end{theorem}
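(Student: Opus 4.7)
The plan is to apply Theorem \ref{teo.boch-intro} with $\phi = S$ to a first eigenfunction $f$ of $\square_S$, so $\square_S f = -\mu_1 f$ with $\mu_1 = \mu_1(\square_S,M)$, and integrate over the compact manifold $M$. The three hypotheses conspire to simplify the identity: constancy of $R$ gives both $\tr S = \frac{(n-2)R}{2(n-1)}$ constant and $\di S\equiv 0$, so $(\tr\phi)_{ij}\equiv 0$ and $\sum_k\phi_{ikk}=\sum_k\phi_{kki}=0$; the harmonic-Weyl hypothesis provides the Codazzi identity $S_{ij,k}=S_{ik,j}$, killing $\phi_{jik}-\phi_{jki}$ pointwise. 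Using Stokes to discard the divergence terms and $\int_M\square_S(|\nabla f|^2)=0$ from self-adjointness, (\ref{theo.bochner}) reduces to
\begin{equation*}
2\mu_1\int_M|\nabla f|^2 = 2\int_M\sum_{i,j,k}S_{ij}f_{ik}f_{jk} + 2\int_M\sum_{i,j,k,m}S_{im}R_{mkjk}f_if_j - \int_M\sum_{i,j}(\Delta S_{ij})f_if_j,
\end{equation*}
because $\int\langle\nabla f,\nabla\square_S f\rangle$ and $\int\langle S(\nabla f),\nabla\Delta f\rangle$ each equal $-\mu_1\int|\nabla f|^2$ after integrating by parts (using $\di S = 0$).

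For the three integrands I would proceed as follows. Apply the Cauchy--Schwarz inequality $(\tr SH)^2\leq\tr S\cdot\tr(SH^2)$, valid for symmetric $S>0$ and symmetric $H$, to $H=\hess f$, noting $\tr(S\cdot\hess f)=\square_S f=-\mu_1 f$; this gives $\int_M\sum_{i,j,k}S_{ij}f_{ik}f_{jk}\geq\frac{\mu_1^2}{\tr S}\int_M f^2=\frac{2(n-1)\mu_1}{(n-2)R}\int_M\langle S\nabla f,\nabla f\rangle$, using the normalization $\mu_1\int f^2=\int\langle S\nabla f,\nabla f\rangle$ that follows from $\square_S f=-\mu_1 f$ and $\di S=0$. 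The Ricci-type term equals $\int_M\langle S^2\nabla f,\nabla f\rangle + \frac{R}{2(n-1)}\int_M\langle S\nabla f,\nabla f\rangle$ since $\sum_k R_{mkjk}=\ric_{mj}=S_{mj}+\frac{R}{2(n-1)}g_{mj}$. For $\int_M(\Delta S_{ij})f_if_j$, Codazzi rewrites $\Delta S_{ij}=\sum_k S_{ij,kk}=\sum_k S_{ik,jk}$; commuting covariant derivatives to $\sum_k S_{ik,kj}=0$ (the $j$-derivative of $\di S=0$) leaves two curvature-$S$ contractions, one equal to $(S\ric)_{ij}$, the other equal, in an orthonormal $S$-eigenbasis $\{e_k\}$ with eigenvalues $\sigma_k$ at the point, to $\sum_k\sigma_k R(\nabla f,e_k,\nabla f,e_k) = \sum_k\sigma_k K(\nabla f,e_k)(|\nabla f|^2-(\nabla f)_k^2)\geq K_0\bigl(\tr S\,|\nabla f|^2-\langle S\nabla f,\nabla f\rangle\bigr)$ by the sectional-curvature hypothesis.

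The Ricci lower bound gives $S\geq(L_0-\frac{R}{2(n-1)})I$, yielding $\int_M\langle S^2\nabla f,\nabla f\rangle\geq(L_0-\frac{R}{2(n-1)})\int_M\langle S\nabla f,\nabla f\rangle$ and $\int_M\langle S\nabla f,\nabla f\rangle\geq(L_0-\frac{R}{2(n-1)})\int_M|\nabla f|^2$. Plugging all bounds into the integrated identity, collecting multiples of $\int_M\langle S\nabla f,\nabla f\rangle$, and solving for $\mu_1$ produces (\ref{eq.estim.S-2}); the factor $R/(R-2L_0)$ emerges from the rearrangement coefficient $1-\frac{2(n-1)\sigma_0}{(n-2)R}=\frac{(n-1)(R-2L_0)}{(n-2)R}$ with $\sigma_0=L_0-\frac{R}{2(n-1)}$. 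The main obstacle is the $\Delta S_{ij}$ commutator/Bianchi step and the identification of the Riemann-$S$ contraction as $R(\nabla f,e_k,\nabla f,e_k)$ in the $S$-eigenbasis, which is precisely where the $K_0$-dependence enters. For the equality case, equality in Cauchy--Schwarz forces $\hess f=\lambda g$; the eigenvalue equation then gives $\lambda=-(\mu_1/\tr S)f$, so $\hess f=-(\mu_1/\tr S)f\,g$ is Obata's equation with constant coefficient, and Obata's theorem identifies $M$ with $\s^n(\mu_1/\tr S)$; a direct computation on the round sphere (where $\tr S=\frac{n(n-2)K_0}{2}$ and $\mu_1=\frac{n(n-2)K_0^2}{2}$) confirms $\mu_1/\tr S=K_0$, hence $M=\s^n(K_0)$.
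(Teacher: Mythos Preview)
Your proposal is correct and follows essentially the same route as the paper's proof: apply the Bochner formula with $\phi=S$, use the Codazzi property (harmonic Weyl) and $\di S=0$ (constant $R$) to simplify, integrate by parts to get $\int\langle\nabla f,\nabla\square_Sf\rangle=\int\langle S\nabla f,\nabla\Delta f\rangle=-\mu_1\int|\nabla f|^2$, bound the Hessian term via the generalized Newton/Cauchy--Schwarz inequality $\tr(SH^2)\ge(\tr SH)^2/\tr S$, compute $\Delta S_{ij}$ through Codazzi plus the Ricci commutation formula, and estimate the resulting curvature terms using $K_0$ and $L_0$ before invoking Obata for the equality case. The paper organizes the curvature estimate slightly differently---it bounds $\sum_i\lambda_i^2f_i^2\ge\lambda_0^2|\nabla f|^2$ and $\sum_i\lambda_if_i^2\ge\lambda_0|\nabla f|^2$ directly rather than chaining $\langle S^2\nabla f,\nabla f\rangle\ge\lambda_0\langle S\nabla f,\nabla f\rangle\ge\lambda_0^2|\nabla f|^2$---but the outcome is identical.

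One small point you gloss over that the paper makes explicit: after the $K_0$ bound on the sectional term you are left with a coefficient $L_0-K_0$ (equivalently $\tfrac{R}{2(n-1)}-K_0$) in front of $\langle S\nabla f,\nabla f\rangle$, and you need this to be nonnegative before applying $\langle S\nabla f,\nabla f\rangle\ge\lambda_0|\nabla f|^2$. The paper checks this separately for $K_0>0$ (where $\tfrac{R}{2(n-1)}\ge\tfrac{n}{2}K_0>K_0$) and $K_0\le 0$ (where it is immediate since $R>0$); you should include this verification.
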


\begin{remark}
Recall that a  Riemannian manifold $(M,g)$ is said \emph{locally conformally flat} if, for any $p\in M,$ there exists a neighborhood $V$ of $p$ and a smooth function $f$ defined on $V$ such that $(V,e^{2f}g)$ is flat.
It is well known (cf. \cite{Besse}, p. 60) that $M^n,\ n\geq4,$ is locally conformally flat if and only if the Weyl tensor vanishes. In \cite{Cheng-Q-M}, Q. M. Cheng has proved that the only compact, connected oriented locally conformally flat $n-$dimensional Riemannian manifold with constant scalar curvature and non-negative Ricci curvature are those  which are quotients of  a space form or a Riemannian product $\s^1\times\s^n(\kappa).$ On the other hand, there are many examples of compact manifolds with harmonic Weyl tensor, see, for example, \cite{Besse}, p. 440-443.
\end{remark}

{\bf b)} Our second application is about isometric immersions.

Let $M^n$ be a $n$-dimensional Riemannian manifold  and  $x:M^n\ria\overline{M}^{n+1}$ be an isometric immersion of $M$ to $(n+1)$-dimensional Riemannian manifold. Denote by $A$ and $H$ the \emph{shape operator} and the mean curvature of the immersion.  
If $\lambda_1,\lambda_2,\ldots,\lambda_n$ are the eigenvalues of $A,$ i.e. the principal curvatures of the immersion, then $H=\sum_{i=1}^n\lambda_i$.


The \emph{first Newton transformation} $P_1:TM\ria TM,$ associated with the second fundamental form $A,$ is defined by
$$
P_1=HI- A.
$$
Let us define the differential operator $L_1$ by
\begin{equation}\label{def.L1}
L_1f = \sum_{i,j=1}^n(P_1)_{ij}f_{ij} = \sum_{i,j=1}^n(Hg_{ij} - h_{ij})f_{ij},
\end{equation}
where $h_{ij}$ are the components of second fundamental form. Note that $P_1$ is a symmetric linear operator. The operator $L_1$ was first introduced by Voss in \cite{Voss} and appears naturally in the study of variation theory for curvature functional  $\mathcal{A}_1=\int_M H dM,$ which is called  $1-$area  {of} $M.$  See for example  \cite{Reilly} and \cite{BC} for more details.

It has been shown by Reilly, \cite{Reilly}, that $P_1$ is divergence free when $\overline{M}$ is a space form of constant sectional curvature. Therefore, under these assumptions, $L_1$ is self-adjoint.

The  eigenvalues of $L_1$ plays an important role in the study of stability for hypersurfaces with  constant $r$-mean curvature (see, for examples, \cites{MR1952173,  MR1933789, MR2084098, MR2653960, MR2048223}). In the case that $A>0,$ we have $P_1$ positive definite. Therefore, $L_1$ is an elliptic operator. We have the following first eigenvalue estimate.

\begin{theorem}\label{teo.eigen.L1-intro}
Let $x:M^n\ria\overline{M}^{n+1}(\kappa)$ be an isometric immersion of a compact Riemannian manifold into a space form of constant sectional curvature $\kappa.$ Suppose that shape operator $A$ satisfies
$$
0<\al I\leq A \leq a\al I,
$$
where $\al>0$ and $a>1$ are constants.
Then

1) when $\kappa>0$, the first nonzero eigenvalue $\mu(L_1,M)$ of operator $L_1$ satisfies
$$
\mu(L_1,M)\geq\displaystyle{\dfrac{1}{2}\left(\dfrac{na}{na-1}\right)\left[2(n-1)\al^3(n-a^2)+ 2\kappa(n-1)^2\al - \sigma\right]}
$$
where $\displaystyle{\sigma=\max_{(p,v)\in TM}\tr(\hess H|_{v^\perp})(p)}$  and $v^\perp = \{u\in T_pM;\lan u,v\ran=0\}$;

 2) when $\kappa\leq0$,  the first nonzero eigenvalue $\mu(L_1,M)$ of operator $L_1$ satisfies
$$
\mu(L_1,M)\geq\displaystyle{\dfrac{1}{2}\left(\dfrac{na}{na-1}\right)\left[2(n-1)\al^3(n-a^2)+ 2\kappa(n-1)^2a\al -\sigma \right].}
$$

Furthermore, the equalities hold if and only if $M$ is a geodesic sphere with the canonical immersion.
\end{theorem}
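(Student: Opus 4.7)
My plan is to apply the Bochner-type identity of Theorem~\ref{teo.boch-intro} with $\phi = P_1$ to a first eigenfunction $f$ of $L_1$, so that $L_1 f = -\mu f$ with $\mu = \mu(L_1,M) > 0$, and integrate over the compact manifold $M$. Because $\overline{M}$ is a space form, Reilly's theorem yields $\di P_1 \equiv 0$, so $\sum_k(P_1)_{ikk} = 0$, whereas $\sum_k(P_1)_{kki} = (\tr P_1)_i = (n-1)H_i$. Consequently, the ``mixed'' term $\sum f_i f_j\bigl(\sum_k\phi_{ikk} - c\sum_k\phi_{kki}\bigr)_j$ in (\ref{theo.bochner}) simplifies to $-c(n-1)\sum f_i f_j H_{ij}$ and exactly cancels the term $c\sum(\tr\phi)_{ij}f_if_j$ for every constant $c$; the two explicit divergences at the end of (\ref{theo.bochner}) vanish on integration.

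Integration by parts (using $\di(P_1\n f)=L_1 f$ since $\di P_1 = 0$) yields
\[
\int_M\lan\n f,\n(L_1 f)\ran = -\mu\int_M|\n f|^2,\qquad \int_M\lan P_1(\n f),\n(\Delta f)\ran = -\int_M(\Delta f)(L_1 f) = -\mu\int_M|\n f|^2.
\]
Diagonalizing $P_1$ pointwise and applying Cauchy-Schwarz to $2\int\sum(P_1)_{ij} f_{ik} f_{jk}$ gives the pointwise estimate $\sum(P_1)_{ij} f_{ik} f_{jk} \ge (L_1 f)^2/\tr P_1$. Combined with $\tr P_1 = (n-1)H \le n(n-1)a\al$ and $\mu\int f^2 = \int\lan P_1\n f,\n f\ran \ge (n-1)\al\int|\n f|^2$, this contributes at least $\tfrac{2\mu}{na}\int|\n f|^2$. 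Summing the three pieces yields a net $-\tfrac{2\mu(na-1)}{na}\int|\n f|^2$, so the remaining curvature terms $D + E$ of the integrated identity satisfy $D + E \le \tfrac{2\mu(na-1)}{na}\int|\n f|^2$.

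For the curvature terms, the Gauss equation gives $\ric = (n-1)\kappa g + HA - A^2$, whence $P_1\ric = (n-1)\kappa P_1 + AP_1^2$, and the Simons identity in a space form reads $\Delta h_{ij} = H_{,ij} + n\kappa h_{ij} - \kappa Hg_{ij} + Hh_{ik}h_{kj} - |A|^2 h_{ij}$. Expanding $-\int f_i f_j\Delta(P_1)_{ij}$, the $\hess H$ contribution collapses through $\hess H(\n f,\n f) - |\n f|^2\Delta H = -|\n f|^2\tr(\hess H|_{v^\perp})$ (with $v = \n f/|\n f|$) to at least $-\sigma\int|\n f|^2$. Diagonalizing $A$ at a point with eigenvalues $\lambda_i\in[\al,a\al]$, the remaining pointwise quantity becomes $\sum_i f_i^2 F_i$ with
\[
F_i = 2\lambda_i(H-\lambda_i)^2 + H\lambda_i^2 - |A|^2\lambda_i + \kappa\bigl[(2n-3)(H-\lambda_i) + (n-1)\lambda_i\bigr].
\]
The algebraic identity $2\lambda_i(H-\lambda_i)^2 + H\lambda_i^2 - |A|^2\lambda_i = \lambda_i\bigl[H(H-\lambda_i) + 2S_2^{(i)}\bigr]$, where $S_2^{(i)}$ is the second elementary symmetric polynomial in $\{\lambda_j\}_{j\ne i}$, together with the bounds $\al\le\lambda_j\le a\al$, leads to $F_i \ge 2(n-1)\al^3(n-a^2) + 2\kappa(n-1)^2\al$ for $\kappa\ge 0$, and the analogous bound with $a\al$ replacing $\al$ in the $\kappa$-coefficient for $\kappa\le 0$.

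Combining, $\tfrac{2\mu(na-1)}{na}\int|\n f|^2 \ge D + E \ge [2(n-1)\al^3(n-a^2) + 2\kappa(n-1)^2\al - \sigma]\int|\n f|^2$ for $\kappa > 0$ (and the analogous inequality for $\kappa \le 0$), yielding the asserted bounds on $\mu$. Equality in the Cauchy-Schwarz and in the pointwise $F_i$-estimate forces $\lambda_1 = \cdots = \lambda_n = \al$ and $H$ constant, so $M$ is totally umbilic with constant mean curvature, hence a geodesic sphere with canonical immersion. The main obstacle is the pointwise lower bound on $F_i$: the non-$\kappa$ portion couples $H$, $|A|^2$, and individual $\lambda_i$ in sign-indefinite ways, so a careful algebraic regrouping (rather than naive term-by-term estimation) is needed to reach an expression sharp at the sphere.
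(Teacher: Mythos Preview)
Your proposal is correct and follows essentially the same route as the paper: apply the Bochner formula of Theorem~\ref{teo.boch-intro} with $\phi=P_1$, integrate, estimate the Hessian term via the generalized Newton inequality (Lemma~\ref{lem.new}), and handle the curvature terms via the Simons identity and the Gauss equation; this is exactly the content of Lemmas~\ref{lem.proof-1} and~\ref{lem.proof-2} in the paper. The only notable difference is your algebraic reorganisation of the pointwise quantity $F_i=\langle Q(A)e_i,e_i\rangle$: you rewrite the non-$\kappa$ part as $\lambda_i\bigl[H(H-\lambda_i)+2S_2^{(i)}\bigr]$, which is manifestly nonnegative under $A>0$ and immediately yields a lower bound at least as strong as $2(n-1)\alpha^3(n-a^2)$, whereas the paper arrives at the same bound via an intermediate $2h_ih_j\le h_i^2+h_j^2$ step. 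Your regrouping is a bit cleaner and avoids any sign ambiguity in the bracket; the paper's version, on the other hand, makes the appearance of the factor $(n-a^2)$ in the final statement more transparent. For the equality case the paper invokes $\hess f=cg$ and Obata's theorem, while you argue directly that the chain of equalities forces $a=1$ and $A=\alpha I$; both are valid, though in either version one should note that simultaneous equality in the bounds $\tr P_1\le n(n-1)a\alpha$ and $\langle P_1\nabla f,\nabla f\rangle\ge (n-1)\alpha|\nabla f|^2$ is what pins down $a=1$.
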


\begin{remark}
If $A\geq\al I>0$ then, by using Gauss equation, $$\ric \geq(n-1)[\kappa+\al^2]=\ric_{\s^n(\alpha)}>0,$$ for $\al^2>-\kappa$. That is: $\ric\geq(n-1)\Lambda>0$ for some constant $\Lambda>0$. 
Conversely, if we assume the Lichnerowicz condition $\ric\geq(n-1)\Lambda>0$, then by using Gauss equation again, we have $\lan A\circ    P_1(X),X\ran\geq(n-1)[\Lambda -\kappa]|X|^2.$ If we assume in addition that $   P_1>0$ and $\Lambda>\kappa,$ then $A$  is positive definite.  
\end{remark}

\begin{remark}
If the mean curvature  $H$ is constant and $A\geq\kappa I,$ then $x(M^n)$ is a geodesic sphere. In fact,

\begin{itemize}
\item[(1)] if $\kappa=0,$ by Hadamard theorem, cf. \cite{Hadamard}, \cite{doCarmo}, the immersion $x:M^n\ria\R^{n+1}$ is an embedding and $x(M^n)$ is a boundary of a convex domain of $\R^{n+1}.$ Thus by using the Alexandrov Theorem, cf. \cite{Alexandrov}, $x(M^n)$ is a round sphere;

\item[(2)] if $\kappa>0,$ by do Carmo-Warner Theorem, cf. \cite{doCarmo}, then $x:M^{n}\ria\s^{n+1}(\kappa)$ is an embedding and $x(M^n)$ is either totally geodesic or contained in a open hemisphere. In the last case, $x(M^n)$ is a boundary of a convex domain in $\s^{n+1}(\kappa).$ Since $A\geq\kappa I>0,$ $x(M)$ cannot be totally geodesic. Thus $x(M)$ is contained in an open hemisphere. On the other hand, in \cite{Ros}, S. Montiel and A. Ros proved that if $x:M^n\ria\s^{n+1}(\kappa)$ is an embedding such that the $r$-mean curvature $S_r$ is constant for some $r$ and $x(M^n)$ is contained in a open hemisphere, then $M^n$ is a geodesic sphere;

\item[(3)] if $\kappa<0,$ by do Carmo-Warner Theorem, cf. \cite{doCarmo}, then $x:M^n\ria\h^{n+1}(\kappa)$ is an embedding and $x(M^n)$ is a boundary of a convex domain in $\h^{n+1}(\kappa).$ On the other hand, in \cite{Ros}, S. Montiel and A. Ros proved that if $x:M^n\ria\h^{n+1}(\kappa)$ is an embedding such that $S_r$ is constant for some $r$ then $M^n$ is a geodesic sphere.

\end{itemize}
\end{remark}

The rest of the paper is organized as follows: In Section \ref{sec.2}, wel give the proof of Theorem \ref{teo.boch-intro}, in Section \ref{sec.3} we prove Theorem \ref{teo.eigen.S-intro}, and in Section \ref{sec.4} we prove Theorem \ref{teo.eigen.L1-intro}. Eventually, in the Appendix, we prove Proposition \ref{prop1.1}, which collects some properties of the Newton and Shouten tensors that we use throughout the paper.

\emph{Acknowledgements.} The authors would like to thank  the anonymous  referee for  his/her valuable comments.

\section{A Bochner-type formula}\label{sec.2}

In this section we will prove a Bochner type formula for the differential operator $\square$ mentioned in the introduction.

\begin{proof}[Proof of Theorem \ref{teo.boch-intro}]
For a point $p\in M$,  for any orthonormal frame $\{e_1,\ldots,e_n\}$ near $p$, we have $|\nabla f|^2=\sum_{i=1}^n(f_i)^2$ and
\[
\begin{split}
\dfrac{1}{2}\square(|\n f|^2) &=\frac{1}{2}\sum_{i=1}^n \sum_{j,k=1}^n\phi_{jk}(f_i^2)_{jk} \\
&= \sum_{i=1}^n\sum_{j,k=1}^n f_i\phi_{jk}(f_i)_{jk} + \sum_{i=1}^n\sum_{j,k=1}^n\phi_{jk}(f_i)_j(f_i)_k.
\end{split}
\]

Now  we choose an orthonormal frame $\{e_1,\ldots,e_n\}$ such that $\phi$ is diagonalized  at $p$, i.e. $\phi_{jk}=\mu_jg_{jk}$, where $\mu_j$ are eigenvalues of $(\phi_{jk})$. Then we choose an orthonormal frame in a neighborhood of $p\in M$ by parallel translating  frame $\{e_1,\ldots,e_n\}$ at $p$. Here at $p$, we have $\nabla_{e_i}e_j=0$ at $p$. Moreover, $\nabla_{e_i}e_j=0$  along the geodesic tangent to $e_i$ which implies $\nabla_{e_i}\nabla_{e_i}e_j=0$ at $p$ for all $i, j$. Thus we have
\[
\begin{split}
\dfrac{1}{2}\square(|\n f|^2)
&= \sum_{i,j=1}^n f_i\mu_j(f_i)_{jj} + \sum_{i,j=1}^n\phi_{jj}(f_i)_j(f_i)_j.
\end{split}
\]
Since the terms $(f_i)_j$ and $(f_i)_{jj}$  denote differentiation of the function $f_i$,  in general they are not equal to the covariant derivatives  $f_{ij}$ and $f_{ijj}$ of $f$. However, by the special choice of our frame, we have $(f_i)_j=f_{ij}$ and $(f_i)_{jj}=f_{ijj}$ at $p$. Therefore, at $p$
\[
\begin{split}
\sum_{i,j=1}^n f_i\mu_j(f_i)_{jj} &= \displaystyle{\sum_{i,j,k=1}^nf_if_{jki}\phi_{jk} + \sum_{i,j,k,m=1}^nf_if_mR_{mjik}\phi_{jk}}\\
&=\displaystyle{\sum_{i,j,k=1}^nf_i(f_{jk}\phi_{jk})_i - \sum_{i,j,k=1}^nf_if_{jk}\phi_{jki} + \sum_{i,j,k,m=1}^nf_if_mR_{mjik}\phi_{jk}}.
\end{split}
\]
Hence
\begin{equation}\label{eq.1}
\dfrac{1}{2}\square(|\n f|^2) = \lan\n f, \n(\square f)\ran + \sum_{i,j,k=1}^n\phi_{ij}f_{jk}f_{ki} + \sum_{i,j,k,m=1}^nf_if_mR_{mjik}\phi_{jk}-\sum_{i,j,k=1}^nf_if_{jk}\phi_{jki}.
\end{equation}
On the other hand,
\[
\begin{split}
-\sum_{i,j,k=1}^nf_if_{jk}\phi_{jki}&=\sum_{i,j,k=1}^nf_if_{jk}(\phi_{jik}-\phi_{jki}) - \sum_{i,j,k=1}^nf_if_{jk}\phi_{jik}\\
&=\sum_{k=1}^n\left(\sum_{i,j=1}^nf_if_j(\phi_{jik} - \phi_{jki})\right)_k - \sum_{i,j,k=1}^nf_if_j(\phi_{jik}-\phi_{jki})_k\\
&\qquad -\sum_{i,j,k=1}^nf_{ik}f_j(\phi_{jik} - \phi_{jki}) - \sum_{i,j,k=1}^nf_if_{kj}\phi_{jik}\\
&=\sum_{k=1}^n\left(\sum_{i,j=1}^nf_if_j(\phi_{jik} - \phi_{jki})\right)_k - \sum_{i,j,k=1}^nf_if_j(\phi_{jik}-\phi_{jki})_k\\
&\qquad - \sum_{i,j,k=1}^nf_jf_{ik}\phi_{jik},
\end{split}
\]
when we used in the last equality that $\displaystyle{\sum_{i,j,k=1}^n f_jf_{ik}\phi_{jki}=\sum_{i,j,k=1}^nf_if_{kj}\phi_{jik}.}$ Then
\begin{equation}\label{eq.AB}
\begin{split}
\dfrac{1}{2}\square\left(|\n f|^2\right) &= \lan\n f, \n(\square f)\ran + \sum_{i,j,k=1}^n\phi_{ij}f_{jk}f_{ki} + \sum_{i,j,k,m=1}^nf_if_mR_{mjik}\phi_{jk}\\
&\qquad + \sum_{k=1}^n\left(\sum_{i,j=1}^nf_if_j(\phi_{jik} - \phi_{jki})\right)_k - \sum_{i,j,k=1}^nf_if_j(\phi_{jik}-\phi_{jki})_k\\
&\qquad - \sum_{i,j,k=1}^nf_jf_{ik}\phi_{jik}.\\
\end{split}
\end{equation}

In order to find a more suited expression for the term $\displaystyle{-\sum_{i,j,k=1}^nf_if_j(\phi_{jik}-\phi_{jki})_k}$ above, we use the following computation (see \cite{yau}, EQ. (2.4), P.198):
\[
\begin{split}
\Delta\phi_{ij}&=\sum_{k=1}^n(\phi_{ijkk} - \phi_{ikjk}) + \sum_{k=1}^n(\phi_{ikkj} - c\phi_{kkij}) + c\left(\sum_{k=1}^n \phi_{kk}\right)_{ij}\\
& -\sum_{m,k=1}^n\phi_{mk}R_{mikj} - \sum_{m,k=1}^n\phi_{im}R_{mkkj},
\end{split}
\]
which implies
\begin{equation}\label{eq.A}
\begin{split}
-\sum_{i,j,k=1}^nf_if_j(\phi_{ijk}-\phi_{ikj})_k &= \sum_{i,j=1}^nf_if_j\left(\sum_{k=1}^n\phi_{ikk}-c\sum_{k=1}^n\phi_{kki}\right)_j + c\sum_{i,j=1}^n(\tr\phi)_{ij}f_if_j\\
&\qquad - \sum_{i,j,k,m=1}^nf_if_j\phi_{mk}R_{mikj} + \sum_{i,j,k,m=1}^nf_if_j\phi_{im}R_{mkik}\\
&\qquad - \sum_{i,j=1}^nf_if_j\Delta\phi_{ij}.\\ 
\end{split}
\end{equation}
Rewriting the last term in right hand side of $(\ref{eq.AB})$ we have
\begin{equation}\label{eq.B}
\begin{split}
-\sum_{i,j,k=1}^nf_jf_{ik}\phi_{jik}&=-\sum_{i,j,k=1}^n(f_jf_{ik}\phi_{ji})_k + \sum_{i,j,k=1}^nf_{jk}f_{ik}\phi_{ji} + \sum_{i,j,k=1}^nf_jf_{ikk}\phi_{ij}\\
&=-\sum_{k=1}^n\left(\sum_{i,j=1}^n f_j\phi_{ij}f_{ik}\right)_k + \sum_{i,j,k=1}^n\phi_{ij}f_{jk}f_{ki}\\
&\qquad \sum_{i,j,k=1}^nf_jf_{kki}\phi_{ij} + \sum_{i,j,k,m=1}^nf_jf_mR_{mkik}\phi_{ij}.
\end{split}
\end{equation}
Equation $(\ref{theo.bochner})$ follows by replacing the expressions of $(\ref{eq.A})$ and $(\ref{eq.B})$ in $(\ref{eq.AB}),$ and noting that \[\displaystyle{\sum_{i,j,k,m=1}^n f_if_j\phi_{mk}R_{mikj}=\sum_{i,j,k,m=1}^nf_if_m\phi_{jk}R_{mjik}.}\]
\end{proof}

\section{Estimate of first eigenvalue of the Shouten operator}\label{sec.3}


The main purpose of this section is to prove Theorem \ref{teo.eigen.S-intro}. We start with  proving two lemmas.

\begin{lemma}[Generalized Newton inequality]\label{lem.new}
Let $A$ and $B$ be two $n\times n$ symmetric matrices. If $B$ is positive definite, then
\begin{equation}\label{eq.new}
\tr(A^2B)\geq\dfrac{[\tr(AB)]^2}{\tr B},
\end{equation}
and the equality holds if and only if $A=\al I$ for some $\al\in\R.$
\end{lemma}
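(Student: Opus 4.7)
The plan is to reduce the inequality to the Cauchy--Schwarz inequality applied to the Frobenius (Hilbert--Schmidt) inner product on matrices, using the positive square root of $B$.

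Since $B$ is symmetric and positive definite, it admits a unique symmetric positive definite square root $B^{1/2}$. The first step is to rewrite all three traces in terms of $B^{1/2}$ and the matrix $AB^{1/2}$, using symmetry of $A$ and $B$ together with the cyclic property of the trace. Concretely, since $(AB^{1/2})^{T}=B^{1/2}A$, I would verify the identities
\[
\tr(A^{2}B)=\tr\!\bigl((AB^{1/2})^{T}(AB^{1/2})\bigr)=\|AB^{1/2}\|_{F}^{2},\quad \tr(B)=\|B^{1/2}\|_{F}^{2},
\]
\[
\tr(AB)=\tr\!\bigl(B^{1/2}(AB^{1/2})\bigr)=\langle B^{1/2},AB^{1/2}\rangle_{F},
\]
where $\langle X,Y\rangle_{F}=\tr(X^{T}Y)$ denotes the Frobenius inner product and $\|\cdot\|_{F}$ the associated norm.

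Once these three identities are in place, the inequality $\tr(A^{2}B)\cdot\tr(B)\ge[\tr(AB)]^{2}$ is just the Cauchy--Schwarz inequality
\[
\langle B^{1/2},AB^{1/2}\rangle_{F}^{2}\le\|B^{1/2}\|_{F}^{2}\cdot\|AB^{1/2}\|_{F}^{2},
\]
and dividing by $\tr B>0$ (here is where positive definiteness of $B$ is essential, since it guarantees both that $B^{1/2}$ is invertible and that $\tr B\neq0$) yields $(\ref{eq.new})$.

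For the equality case, Cauchy--Schwarz gives equality if and only if the two vectors $B^{1/2}$ and $AB^{1/2}$ are linearly dependent in the Frobenius inner product, i.e. $AB^{1/2}=\alpha B^{1/2}$ for some $\alpha\in\R$. Multiplying on the right by the invertible matrix $B^{-1/2}$ gives $A=\alpha I$, and conversely $A=\alpha I$ immediately yields equality in $(\ref{eq.new})$. The only subtlety, which I do not expect to be a real obstacle, is bookkeeping the symmetry of $A$ and $B$ when transposing products; apart from that the argument is essentially one application of Cauchy--Schwarz.
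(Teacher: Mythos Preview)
Your proof is correct and is essentially the same Cauchy--Schwarz argument the paper uses: the paper applies Cauchy--Schwarz to the pair $A\sqrt{C}$, $(\sqrt{C})^{-1}B$ and then sets $C=B$, which reduces exactly to your pair $AB^{1/2}$, $B^{1/2}$. Your version is in fact more streamlined, since you bypass the auxiliary matrix $C$ and the extra inequality $\tr[(AB)^2]\le\tr(A^2B^2)$ that the paper invokes but does not actually need once $C=B$.
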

\begin{proof}
Let  $C$ be a positive definite matrix. By using the Cauchy-Schwarz inequality with $A\sqrt{C}$ and $(\sqrt{C})^{-1}B,$ and the fact $\tr[(AB)^2]\leq\tr(A^2B^2),$ which holds for symmetric matrices, we have
$$
[\tr(AB)]^2=\tr(A\sqrt{C}(\sqrt{C})^{-1}B)^2\leq\tr(A^2C)\tr(B^2C^{-1}).
$$
In particular, since  $B$ is  positive definite, we can choose  $C=B$ to obtain
$$
[\tr(AB)]^2\leq\tr(A^2B)\tr B,
$$
i.e.,
$$
\tr(A^2B)\geq\dfrac{[\tr(AB)]^2}{\tr B}.
$$
The equality holds if and only if
$$
A\sqrt{B}=\al(\sqrt{B})^{-1}B \Leftrightarrow (A\sqrt{B})\sqrt{B}=\al(\sqrt{B})^{-1}B\sqrt{B} \Leftrightarrow AB = \al B \Leftrightarrow A=\al I.
$$
\end{proof}
\begin{remark} When $B=I,$  the inequality $(\ref{eq.new})$ becomes
$$
\|A\|^2\geq\dfrac{1}{n}(\tr A)^2,
$$
which is known as the \emph{(classical) Newton inequality}.
\end{remark}
When  $A=[f_{ij}]_{n\times n}$ and $B=[\phi_{ij}]_{n\times n},$ $(\ref{eq.new})$ implies
\begin{lemma}\label{lem.lich} If  $\phi$ is positive definite, then
$$
\sum_{i,j,k=1}^n\phi_{ij}f_{jk}f_{ki}\geq\dfrac{(\square f)^2}{\tr\phi},
$$
and the equality holds if and only if, $f_{ij}=\al g_{ij},$ i.e., $\hess f(X,Y) = \al\lan X,Y\ran.$
\end{lemma}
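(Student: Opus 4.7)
The claim is essentially a direct specialization of Lemma \ref{lem.new}, so my plan is to identify the correct matrices and transcribe the inequality. Working in any orthonormal frame at a point $p$, set $A=[f_{ij}]$ and $B=[\phi_{ij}]$, both of which are $n\times n$ symmetric, and $B$ is positive definite by hypothesis. Lemma \ref{lem.new} then applies.

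The first step is the bookkeeping: check that $\tr(AB)=\square f$ and that $\tr(A^2B)$ is exactly the left-hand side of the stated inequality. The first identity is immediate, since $\tr(AB)=\sum_{i,j}f_{ij}\phi_{ji}=\sum_{i,j}\phi_{ij}f_{ij}=\square f$ using the symmetry of both tensors. For the second, unwinding the matrix product gives
\[
\tr(A^2 B)=\sum_{i,j,k}A_{ij}A_{jk}B_{ki}=\sum_{i,j,k}f_{ij}f_{jk}\phi_{ki},
\]
and after relabelling $(i,j,k)\mapsto(k,i,j)$ and using the symmetry of $\phi$, this coincides with $\sum_{i,j,k}\phi_{ij}f_{jk}f_{ki}$.

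With these identifications, Lemma \ref{lem.new} yields
\[
\sum_{i,j,k=1}^n\phi_{ij}f_{jk}f_{ki}=\tr(A^2B)\geq\frac{[\tr(AB)]^2}{\tr B}=\frac{(\square f)^2}{\tr\phi},
\]
which is exactly the desired inequality; note that $\tr\phi>0$ since $\phi$ is positive definite, so the division is legitimate.

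For the equality case, Lemma \ref{lem.new} gives that equality holds iff $A=\alpha I$ for some $\alpha\in\R$, i.e. $f_{ij}=\alpha\,\delta_{ij}$ in the chosen orthonormal frame. Since this is a tensorial relation, it is equivalent to $f_{ij}=\alpha\,g_{ij}$, or, intrinsically, $\hess f(X,Y)=\alpha\langle X,Y\rangle$ for all $X,Y\in TM$. There is no real obstacle here; the only item requiring any care is the index manipulation showing $\tr(A^2B)=\sum\phi_{ij}f_{jk}f_{ki}$, which one needs to do once carefully using the symmetry of $\phi$ and $\hess f$.
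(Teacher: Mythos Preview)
Your proof is correct and follows exactly the paper's approach: the paper simply states that the lemma follows from Lemma~\ref{lem.new} with $A=[f_{ij}]$ and $B=[\phi_{ij}]$, and you have filled in the same identification with the index bookkeeping made explicit.
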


In Proposition \ref{prop1.1}, item $(4),$ in the Appendix, we have
$$
\di S = \n(\tr S).
$$
Therefore $S$ is divergence free if and only if $M$ has constant scalar curvature. Now let us prove Theorem \ref{teo.eigen.S-intro}.

\begin{proof}[Proof of Theorem \ref{teo.eigen.S-intro}.]
Since $S_{jik}=S_{jki}$ and $\di S=0,$ the Bochner formula of Theorem \ref{teo.boch-intro} in this case becomes
\begin{equation}\label{eq.boch.S}
\begin{split}
\dfrac{1}{2}\square_S|\n f|^2&=\lan\n f, \n (\square_S f)\ran + 2\sum_{i,j,k=1}^nS_{ij}f_{jk}f_{ki} + \lan S(\n f),\n(\Delta f)\ran\\
&\qquad + 2\ric(\n f,S(\n f)) -\sum_{i,j=1}^nf_if_j\Delta S_{ij}- \sum_{k=1}^n\left(\sum_{i,j=1}^nf_jS_{ij}f_{ik}\right)_k.\\
\end{split}
\end{equation}

Let us integrate and estimate each terms in the equation (\ref{eq.boch.S}). We will complete our proof after proving two claims.

\medskip
\begin{claim} Let $\mu>0$ and a smooth function $f:M\ria\R$ such that $\square_S f = -\mu f$, then
$$
\int_M \lan S(\n f), \n (\Delta f)\ran dM =-\mu\int_M|\n f|^2dM.
$$
\end{claim}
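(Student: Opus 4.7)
The plan is to integrate by parts twice on the compact manifold $M$, using the fact that $\square_S$ is self-adjoint (equivalently, $S$ is divergence free) because $M$ has constant scalar curvature.

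First I would view $V := S(\n f)$ as a vector field on $M$ and move the derivative off of $\Delta f$ by Stokes' theorem:
$$
\int_M \lan S(\n f),\n(\Delta f)\ran\, dM \;=\; -\int_M (\Delta f)\,\di\bigl(S(\n f)\bigr)\, dM.
$$
This is legitimate because $M$ is compact without boundary.

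Next I would expand the divergence. In a local orthonormal frame,
$$
\di\bigl(S(\n f)\bigr) \;=\; \sum_{i,j}\bigl(S_{ij,i}\,f_j + S_{ij}\,f_{ji}\bigr) \;=\; \lan\di S,\n f\ran + \square_S f.
$$
By Proposition \ref{prop1.1}(4) in the Appendix, $\di S = \n(\tr S)$, and since the scalar curvature $R$ is constant, $\di S \equiv 0$. Hence $\di(S(\n f)) = \square_S f = -\mu f$. Plugging this in and doing one more integration by parts gives
$$
\int_M \lan S(\n f),\n(\Delta f)\ran\, dM \;=\; \mu\int_M f\,\Delta f\, dM \;=\; -\mu\int_M |\n f|^2\, dM,
$$
which is the desired identity.

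I do not expect any real obstacle here: the claim is essentially a restatement of the self-adjointness of $\square_S$ together with a standard integration by parts. The only thing to be careful about is invoking $\di S = 0$, which in turn uses both the constant scalar curvature hypothesis and the second Bianchi-type identity recorded in the Appendix. The substantive analytic work will appear later, when this identity is combined with the other terms from the Bochner formula \eqref{eq.boch.S} to produce the eigenvalue estimate.
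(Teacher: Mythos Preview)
Your argument is correct and essentially identical to the paper's: both integrate by parts to replace $\lan S(\n f),\n(\Delta f)\ran$ by $-\Delta f\cdot\di(S(\n f))$, use $\di S=0$ (from constant scalar curvature) to identify $\di(S(\n f))=\square_S f=-\mu f$, and finish with $\int_M f\Delta f=-\int_M|\n f|^2$. The only slip is the citation: the identity $\di S=\n(\tr S)$ is item (5), not (4), of Proposition~\ref{prop1.1}.
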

In fact, by using Divergence Theorem, we have
\[
\begin{split}
\displaystyle{\int_M \lan S(\Delta f),\n(\Delta f)\ran dM}
=& \displaystyle{\int_M \di(\Delta f\cdot S(\n f))dM -\int_M \Delta f\cdot\di(S(\n f)) dM}\\
=&\displaystyle{-\int_M \Delta f\cdot\square_S f dM}\\
=&\displaystyle{\mu\int_M f\Delta f dM}\\
=&\displaystyle{-\mu\int_M|\n f|^2dM.}\\
\end{split}												
\]
\begin{claim}
$$
\int_M\left[ 2\ric(\n f, S(\n f)) - \sum_{i,j=1}^nf_if_j\Delta S_{ij}\right]dM \geq \Gamma\int_M|\n f|^2dM.
$$
where
\[
\Gamma = L_0^2 - \left(\dfrac{R}{2(n-1)} + K_0\right)L_0 + \dfrac{1}{2}K_0R.
\]
\end{claim}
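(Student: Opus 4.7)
The plan is to establish the stated inequality pointwise, which immediately yields the integrated form. The two hypotheses translate into clean structural properties of $S$: by the contracted second Bianchi identity, the harmonic-Weyl condition $\di W = 0$ is equivalent to $S$ being Codazzi, i.e.\ $S_{ijk} = S_{ikj}$ (as already noted in the introduction); and since $\tr S = \tfrac{n-2}{2(n-1)}R$, constant scalar curvature forces $\tr S$ to be constant, hence $\n^2 \tr S \equiv 0$.

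The first step is to derive a Bochner-type identity for the Codazzi tensor $S$. From $\Delta S_{ij} = \sum_k S_{ijkk} = \sum_k S_{ikjk}$, commuting the last two covariant derivatives via the Ricci identity for a $(0,2)$-tensor yields an expression of the schematic form
\[
\Delta S_{ij} \;=\; (\tr S)_{ij} \;-\; \sum_{k,l} R_{ikjl}\,S_{kl} \;+\; \sum_l \ric_{jl}\, S_{li},
\]
in which the first term vanishes under our hypothesis. Contracting with $f_if_j$ and substituting yields
\[
2\ric(\n f, S(\n f)) - \sum_{i,j} f_if_j\,\Delta S_{ij} \;=\; \ric(\n f, S(\n f)) + \sum_{i,j,k,l} R_{ikjl}\,S_{kl}\, f_i f_j,
\]
because the $\ric$--$S$ contribution from $\Delta S_{ij}$ cancels one copy of $\ric(\n f, S(\n f))$. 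This cancellation is the crucial algebraic consequence of the two hypotheses.

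For the pointwise estimate, fix $p\in M$ and pick an orthonormal frame $\{e_k\}$ simultaneously diagonalizing $\ric$ and $S$, with eigenvalues $r_k \geq L_0$ and $s_k := r_k - \tfrac{R}{2(n-1)} > 0$. Writing $\n f = \sum_k f_k e_k$, the Ricci piece obeys $\ric(\n f, S(\n f)) = \sum_k r_k s_k f_k^2 \geq L_0 \langle S(\n f), \n f\rangle$. The curvature piece diagonalizes to $\sum_k s_k R(\n f, e_k, \n f, e_k)$, and the sectional-curvature bound $K\geq K_0$ together with $s_k>0$ gives
\[
\sum_{i,j,k,l} R_{ikjl}\,S_{kl}\, f_if_j \;\geq\; K_0\bigl(\tr S \cdot |\n f|^2 - \langle S(\n f), \n f\rangle\bigr).
\]
Adding the two lower bounds, using $\tr S = \tfrac{(n-2)R}{2(n-1)}$ and $\langle S(\n f), \n f\rangle \geq (L_0 - \tfrac{R}{2(n-1)})|\n f|^2$ (valid since $L_0 \geq K_0$, which follows from $\ric \geq (n-1)K_0$ when $K_0\geq 0$ and from $S>0$ when $K_0<0$), a short algebraic simplification collapses the result to $\Gamma |\n f|^2$ exactly: the coefficient emerges as $(L_0 - K_0)\bigl(L_0 - \tfrac{R}{2(n-1)}\bigr) + K_0\tfrac{(n-2)R}{2(n-1)} = \Gamma$.

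The principal obstacle is the careful derivation of the identity for $\Delta S_{ij}$ in the paper's sign conventions, together with verifying that the Ricci--Schouten term appearing on the right-hand side cancels one copy of $\ric(\n f, S(\n f))$; only with this cancellation does the final estimate assemble into the stated form. After that, the remaining work is routine bookkeeping and the algebraic identity above.
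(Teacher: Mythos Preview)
Your argument is correct and follows essentially the same route as the paper: compute $\Delta S_{ij}$ via the Codazzi property and the Ricci identity (dropping $(\tr S)_{ij}$ since $R$ is constant), cancel one copy of $\ric(\n f,S(\n f))$, bound the sectional-curvature term below by $K_0$, and then bound the remaining Schouten/Ricci piece using the minimum eigenvalue. The only cosmetic difference is that the paper works directly in an eigenframe of $S$, keeping $\sum_i\lambda_i^2 f_i^2$ and $\sum_i\lambda_i f_i^2$ separate and verifying $\tfrac{R}{2(n-1)}-K_0>0$ before bounding each by $\lambda_0$, whereas you package the same quantity as $\ric(\n f,S(\n f))\geq L_0\langle S(\n f),\n f\rangle$ and check $L_0\geq K_0$; both organizations yield exactly the same constant $\Gamma$.
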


To prove this claim, first note that
\[
\begin{split}
\ric(\n f,S(\n f)) &= \lan\ric(\n f),S(\n f)\ran\\
& = \left\lan\left(S+\dfrac{R}{2(n-1)}I\right)(\n f), S(\n f)\right\ran\\
&= |S(\n f)|^2 + \dfrac{R}{2(n-1)}\lan S(\n f),\n f\ran.
\end{split}
\]
Since $S$ is a Codazzi tensor,
$$
(\Delta S)_{ij} := \sum_{k=1}^n S_{ijkk} = \sum_{k=1}^n S_{ikjk} = \sum_{k=1}^n S_{kijk}.
$$
By using Ricci identity
$$
S_{kijk} = S_{kikj} + \sum_{m=1}^n S_{mk}R_{mijk} + \sum_{m=1}^nS_{mi}R_{mkjk},
$$
and following a computation of Cheng and Yau, cf. \cite{yau}, we have
\[
\begin{split}
(\Delta S)_{ij} &= \displaystyle{\sum_{k=1}^n S_{kkij} + \sum_{m,k=1}^n S_{mk}R_{mijk} + \sum_{m,k=1}^n S_{mi}R_{mkjk}}\\
								&=\displaystyle{(\tr S)_{ij} + \sum_{m=1}^n S_{mi}\ric_{mj} + \sum_{m,k=1}^n S_{mk}R_{mijk}}\\
								&=\displaystyle{\sum_{m=1}^n S_{mi}\left(S_{mj}+\dfrac{R}{2(n-1)}g_{mj}\right)+ \sum_{m,k=1}^n S_{mk}R_{mijk}}\\
&=\displaystyle{\sum_{m=1}^n S_{mi}S_{mj} + \dfrac{R}{2(n-1)}\sum_{m=1}^n S_{mi}g_{mj} + \sum_{m,k=1}^n S_{mk}R_{mijk}}\\
&=\displaystyle{(S^2)_{ij} + \dfrac{\tr S}{n-2}S_{ij} + \sum_{m,k=1}^n S_{mk}R_{mijk}.}
\end{split}
\]
We now choose  an orthonormal  frame such that $S_{ij}=\lambda_ig_{ij}$ at a point $p\in M.$ Let $K(u,v)$ denote the sectional curvature of the plane generated by $u,v.$ Then

\begin{eqnarray*}
&&2\ric(\n f, S(\n f))- \sum_{i,j=1}^nf_if_j\Delta S_{ij}\\
&=&\displaystyle{2\sum_{i=1}^n\left(\lambda_i + \dfrac{R}{2(n-1)}\right)\lambda_if_i^2 - \sum_{i=1}^n\lambda_i^2f_i^2 - \dfrac{R}{2(n-1)}\sum_{i=1}^n\lambda_if_i^2}\\
&&\displaystyle{-\sum_{i,j,k=1}^n\lambda_kR_{kijk}f_if_j}\\
&=&\displaystyle{\sum_{i=1}^n\lambda_i^2f_i^2 + \dfrac{R}{2(n-1)}\sum_{i=1}^n\lambda_if_i^2 + \sum_{k=1}^n\lambda_kK(e_k,\n f)[|\n f|^2-\lan e_k,\n f\ran^2]}\\
&\geq&\displaystyle{\sum_{i=1}^n\lambda_i^2f_i^2 + \dfrac{R}{2(n-1)}\sum_{i=1}^n\lambda_if_i^2 +K_0\sum_{k=1}^n\lambda_k[|\n f|^2 - \lan e_k,\n f\ran^2]}\\
&=&\displaystyle{\sum_{i=1}^n\lambda_i^2f_i^2 + \left(\dfrac{R}{2(n-1)}-K_0\right)\sum_{i=1}^n\lambda_if_i^2 + \dfrac{n-2}{2(n-1)}K_0R|\n f|^2}.
\end{eqnarray*}
Note that, if $K_0<0,$ then $\dfrac{R}{2(n-1)}-K_0>0,$ and if $K_0>0,$ then
\[
\begin{split}
\dfrac{R}{2(n-1)}-K_0 =& \dfrac{1}{2(n-1)}\sum_{i,j=1}^nK(e_i,e_j) - K_0 \\
\geq& \dfrac{n(n-1)}{2(n-1)}K_0 - K_0 \\
= &\left(\dfrac{n}{2}-1\right)K_0>0.
\end{split}
\]
It implies,
\[
\begin{split}
2\ric(\n f, S(\n f)) & - \sum_{i,j=1}^nf_if_j\Delta S_{ij}\\
\geq&\displaystyle{\sum_{i=1}^n\lambda_i^2f_i^2 + \left(\dfrac{R}{2(n-1)}-K_0\right)\sum_{i=1}^n\lambda_if_i^2 + \dfrac{n-2}{2(n-1)}K_0R|\n f|^2}\\
\geq&\displaystyle{\left[\lambda_0^2 + \left(\dfrac{R}{2(n-1)}-K_0\right)\lambda_0 + \dfrac{n-2}{2(n-1)}K_0R\right]|\n f|^2,}\\
\end{split}
\]
%
where $\displaystyle{\lambda_0=\min_{p\in M}\left\{\min_{1\leq i \leq n} \lambda_i(p)\right\}.}$ Since $\lambda_0=L_0-\dfrac{R}{2(n-1)},$ where $L_0$ is the minimum of the Ricci curvature, then the claim follows from the definition of $\Gamma$.

Now we are ready to complete the proof of Theorem \ref{teo.eigen.S-intro}. Since $\square_S f=-\mu f,$ we have
\begin{equation}\label{eq-S-1}
\int_M \lan \n f, \n (\square_S f)\ran = -\mu\int_M |\n f|^2 dM
\end{equation}
and, by using the Lemma \ref{lem.new},
\begin{equation}\label{eq-S-2}
2\int_M \left(\sum_{i,j,k=1}^nS_{ij}f_{jk}f_{ki}\right)dM \geq 2\int_M \dfrac{(\square_S f)^2}{\tr S}dM\geq\dfrac{2\mu^2}{\tr S}\int_M f^2 dM.
\end{equation}

Since, by using Divergence Theorem, $\displaystyle{\int_M\left[\sum_{k=1}^n\left(\sum_{i,j=1}^nf_j\phi_{ij}f_{ik}\right)_k\right]dM=0},$ then replacing these estimates in the equation ($\ref{eq.boch.S}$), p. $\pageref{eq.boch.S},$ we have
\begin{equation}\label{eq.final-1}
0\geq -2\mu\int_M|\n f|^2dM + \dfrac{2\mu^2}{\tr S}\int_M f^2 dM + \Gamma\int_M|\n f|^2 dM.\\
\end{equation}
Since
\[
\begin{split}
\displaystyle{\int_M |\n f|^2dM} &\leq\displaystyle{\dfrac{1}{\lambda_0}\int_M\lan S(\n f), \n f\ran dM}\\
&=\displaystyle{\dfrac{\mu}{\lambda_0}\int_M f^2dM,}\\
\end{split}
\]
we obtain
\[
\begin{split}
0&\geq\left(\Gamma - 2\mu\right)\int_M|\n f|^2dM + \dfrac{2\mu\lambda_0}{\tr S}\int_M |\n f|^2dM\\
&=\left(\Gamma - 2\mu\left(1-\dfrac{\lambda_0}{\tr S}\right)\right)\int_M|\n f|^2dM.
\end{split}
\]
Thus
\[
\begin{split}
\mu&\geq\dfrac{\Gamma}{2}\left(\dfrac{\tr S}{\tr S-\lambda_0}\right)\\
&=\dfrac{n-2}{2(n-1)}\left(\dfrac{R}{R-2L_0}\right)\Gamma\\
&=\dfrac{n-2}{2(n-1)}\left(\dfrac{R}{R-2L_0}\right)\left[L_0^2 - \left(\dfrac{R}{2(n-1)}+K_0\right)L_0 + \dfrac{1}{2}K_0R\right].
\end{split}
\]
To prove the equality case, we suppose $K_0=1$ and $M^n=\s^n.$ In this case $S=\dfrac{n-2}{2}I,$ $\square_S f=\dfrac{n-2}{2}\Delta f$ and $\Gamma =\dfrac{(n-2)(n-1)}{2}.$ Then the estimate becomes equality. Conversely, if the equality holds, Lemma $\ref{lem.new},$ p. $\pageref{lem.new},$ gives us that $\hess(f)=\al I.$  Following the proof of Obata Theorem step-by-step, cf. \cite{obata}, we can see that $M$ is a sphere.
\end{proof}

\section{The estimate of the first eigenvalue of $L_1$}\label{sec.4}

This section will give the proof of Theorem \ref{teo.eigen.L1-intro}.
We start with the following lemma.
\begin{lemma}\label{lem.proof-1}
Let $x:M^n\ria\overline{M}^{n+1}(\kappa)$ be an isometric immersion of a $n$-dimensional Riemannian manifold $M$ into a $(n+1)$-dimensional space form $\overline{M}$ of constant sectional curvature $\kappa.$ Then 
\begin{equation}\label{eq.proof-1}
2\ric(\n f,P_1(\n f)) - \lan(\Delta P_1)(\n f),\n f\ran = \hess(H)(\n f,\n f) - (\Delta H)|\n f|^2 + \lan Q(A)(\n f),\n f\ran,
\end{equation}
where $\displaystyle{\lan(\Delta P_1)(\n f),\n f\ran=\sum_{i,j=1}^nf_if_j\Delta(Hg_{ij}-h_{ij})}$ and $Q(A)=2A^3-3HA^2+(2H^2-|A|^2-\kappa(n-2))A + \kappa(2n-3)HI.$
\end{lemma}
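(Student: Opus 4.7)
The plan is to reduce the identity to two classical ingredients: the Gauss equation and the Simons identity for the second fundamental form in a space form. Writing $P_1 = HI - A$ and distributing $\Delta$ componentwise, one has $\lan (\Delta P_1)(\n f), \n f\ran = (\Delta H)|\n f|^2 - \lan (\Delta A)(\n f), \n f\ran$, which absorbs the $-(\Delta H)|\n f|^2$ on the right-hand side of $(\ref{eq.proof-1})$ and reduces the claim to
$$2\,\ric(\n f, P_1(\n f)) + \lan(\Delta A)(\n f), \n f\ran = \hess(H)(\n f, \n f) + \lan Q(A)(\n f), \n f\ran.$$

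For the Ricci term, I would invoke the Gauss equation $R_{ijkl} = \kappa(g_{ik}g_{jl} - g_{il}g_{jk}) + h_{ik}h_{jl} - h_{il}h_{jk}$ in $\overline{M}^{n+1}(\kappa)$; contraction gives $\ric = (n-1)\kappa\, I + HA - A^2$ as a $(1,1)$-tensor. Since $A$, $P_1 = HI - A$ and $I$ commute pairwise, expanding yields
$$2\,\ric\circ P_1 = 2(n-1)\kappa H\,I - 2(n-1)\kappa A + 2H^2 A - 4HA^2 + 2A^3.$$

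The main analytic input is the Simons identity in a space form: applying Codazzi ($h_{ij,k}=h_{ik,j}$) one writes $h_{ij,kk}=h_{ik,jk}$, then commutes the last two covariant derivatives via the Ricci identity and substitutes the ambient curvature from Gauss, obtaining
$$\Delta h_{ij} = H_{,ij} + H(A^2)_{ij} - |A|^2 h_{ij} + n\kappa h_{ij} - \kappa H g_{ij}.$$
Contracting with $f_i f_j$ gives
$$\lan(\Delta A)(\n f),\n f\ran = \hess(H)(\n f,\n f) + \lan\bigl(HA^2 - |A|^2 A + n\kappa A - \kappa H\, I\bigr)(\n f), \n f\ran.$$

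Adding the two tensor expressions, the coefficients of $I$, $A$, $A^2$, $A^3$ collect to $(2n-3)\kappa H$, $2H^2 - |A|^2 - (n-2)\kappa$, $-3H$ and $2$ respectively, which is precisely $Q(A)$, completing the reduction. The main obstacle is the derivation of the Simons formula itself, since it requires careful sign bookkeeping when applying the Ricci identity to the second fundamental form and then replacing the intrinsic curvature by the Gauss expression; once that formula is in hand, everything else is a direct line-by-line algebraic verification.
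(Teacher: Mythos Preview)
Your proposal is correct and follows essentially the same route as the paper: the paper also writes $-\lan(\Delta P_1)(\n f),\n f\ran=\lan(\Delta A-(\Delta H)I)(\n f),\n f\ran$, invokes the Simons identity $\Delta h_{ij}=H_{ij}+(\kappa n-|A|^2)h_{ij}-\kappa Hg_{ij}+H(A^2)_{ij}$ (quoting Schoen--Simon--Yau rather than rederiving it), computes $2\ric(\n f,P_1(\n f))$ from the Gauss equation, and collects the coefficients of $I,A,A^2,A^3$ exactly as you do. Your algebraic bookkeeping matches theirs term by term.
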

\begin{proof}
By following Schoen-Simon-Yau's computations, see \cite{SSY}, eq. $(1.20),$ p. $278,$ we have
\[
\Delta h_{ij}=H_{ij} + (\kappa n - |A|^2)h_{ij} - \kappa Hg_{ij} + H\sum_{k=1}^n h_{ik}h_{kj},
\]
equivalently,
$$
(\Delta A)(X)=\n_{X}\n H + (\kappa n - |A|^2)A(X) - \kappa HX + HA^2(X).
$$
It implies
\[
\begin{split}
-\lan(\Delta P_1)(\n f),\n f\ran=&\lan(\Delta A - (\Delta H)I)(\n f),\n f\ran \\
=& \lan(\Delta A)(\n f),\n f\ran - \Delta H |\n f|^2\\
=&\hess(H)(\n f, \n f) + (\kappa n - |A|^2)\lan A(\n f),\n f\ran \\
&+ H\lan A^2(\n f), \n f\ran - \kappa H|\n f|^2 - \Delta H|\n f|^2.\\
\end{split}
\]
By using Gauss equation
\[
\begin{split}
\lan R(X,Y)Z,T\ran &=\kappa(\lan X,Z\ran\lan Y,T\ran-\lan Y,Z\ran\lan X,T\ran)\\
&\qquad+\lan A(X),Z\ran\lan A(Y),T\ran-\lan A(Y),Z\ran\lan A(X),T\ran,	
\end{split}
\]
we have
\[
\begin{split}
\lan R(\n f,e_i)P_1(\n f),e_i\ran &=\kappa(\lan \n f,P_1(\n f)\ran\lan e_i,e_i\ran-\lan e_i,P_1(\n f)\ran\lan \n f,e_i\ran)\\
									 &\qquad+\lan A(\n f),P_1(\n f)\ran\lan A(e_i),e_i\ran-\lan A(e_i),P_1(\n f)\ran\lan A(\n f),e_i\ran.
\end{split}
\]
After tracing, we obtain
\[
\begin{split}
2\ric(\n f,P_1(\n f))&=2\kappa(n-1)\lan \n f,P_1(\n f)\ran + 2H\lan A(\n f),P_1(\n f)\ran - 2\lan A^2(\n f),P_1(\n f)\ran\\
										&=2\kappa(n-1)\lan\n f,(HI-A)(\n f)\ran + 2H\lan A(\n f),(HI-A)(\n f)\ran\\
										&\qquad - 2\lan A^2(\n f),(HI-A)(\n f)\ran\\
										&=2\kappa(n-1)H|\n f|^2+(2H^2-2\kappa(n-1))\lan A(\n f),\n f\ran\\
										&\qquad- 4H\lan A^2(\n f),\n f\ran + 2\lan A^3(\n f),\n f\ran.\\
\end{split}
\]
Then
$$
2\ric(\n f,P_1(\n f)) - \lan(\Delta P_1)(\n f),\n f\ran = \hess(H)(\n f,\n f) - (\Delta H)|\n f|^2 + \lan Q(A)(\n f),\n f\ran.
$$
\end{proof}

Next lemma is a local estimate for $Q(A).$

\begin{lemma}\label{lem.proof-2}
If $0<\al I\leq A \leq a\al I$ then,
\begin{itemize}
\item[(i)] if $\kappa>0,$
\[
\lan Q(A)(X),X\ran\geq\left[2(n-1)\al^3(n-a^2)+ 2\kappa(n-1)^2\al\right]|X|^2,\\
\]
\item[(i)] if $\kappa\leq0$,
\[
\lan Q(A)(X),X\ran \geq\left[2(n-1)\al^3(n-a^2)+ 2\kappa(n-1)^2a\al\right]|X|^2,
\]
\end{itemize}
for any $X\in TM.$
\end{lemma}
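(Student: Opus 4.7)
The plan is to diagonalize $A$ pointwise and reduce the matrix inequality to an eigenvalue-by-eigenvalue scalar inequality. Since $Q(A)$ is a polynomial in $A$ whose coefficients depend only on $H$ and $|A|^2$, it is simultaneously diagonalizable with $A$. So at a point $p$ I choose an orthonormal basis $\{e_1,\dots,e_n\}$ of eigenvectors with $Ae_i=\lambda_i e_i$, $\alpha\leq\lambda_i\leq a\alpha$, and write $X=\sum_i x_ie_i$. Then
\[
\lan Q(A)X,X\ran=\sum_{i=1}^n q_i\,x_i^2,\qquad |X|^2=\sum_{i=1}^n x_i^2,
\]
where $q_i=2\lambda_i^3-3H\lambda_i^2+(2H^2-|A|^2)\lambda_i-\kappa(n-2)\lambda_i+\kappa(2n-3)H$. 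Hence it suffices to bound each $q_i$ from below by the target constant.

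Next I split $q_i = q_i^{(1)}+\kappa\,q_i^{(2)}$ into the curvature-free part and the curvature part, and introduce the abbreviations $s_i:=H-\lambda_i=\sum_{j\neq i}\lambda_j$ and $p_i:=|A|^2-\lambda_i^2=\sum_{j\neq i}\lambda_j^2$. A direct expansion using $H=\lambda_i+s_i$ and $|A|^2=\lambda_i^2+p_i$ causes the $\lambda_i^3$ terms to cancel and collapses the cubic into the clean form
\[
q_i^{(1)}=\lambda_i\bigl(s_i\lambda_i+2s_i^2-p_i\bigr),\qquad q_i^{(2)}=(n-1)\lambda_i+(2n-3)s_i,
\]
both of which are manifestly positive and involve only $\lambda_i$, $s_i$, $p_i$. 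This is the main technical step; once the factorization of $q_i^{(1)}$ is in hand everything else is a bookkeeping exercise with the pinching bounds.

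For the curvature-free part I plug in $\lambda_i\geq\alpha$, $s_i\geq(n-1)\alpha$, and $p_i\leq(n-1)a^2\alpha^2$ to obtain
\[
q_i^{(1)}\geq\alpha\left[(n-1)\alpha^2+2(n-1)^2\alpha^2-(n-1)a^2\alpha^2\right]=(n-1)(2n-1-a^2)\alpha^3,
\]
and since $a\geq 1$ the identity $(n-1)(2n-1-a^2)=2(n-1)(n-a^2)+(n-1)(a^2-1)$ shows this is at least $2(n-1)(n-a^2)\alpha^3$, matching the common piece of both target bounds.

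For the curvature part, using $\alpha\leq\lambda_i\leq a\alpha$ and $(n-1)\alpha\leq s_i\leq(n-1)a\alpha$, I get the two-sided bound
\[
2(n-1)^2\alpha\;\leq\;q_i^{(2)}\;\leq\;2(n-1)^2 a\,\alpha.
\]
When $\kappa>0$ I multiply the lower bound by $\kappa$ to obtain $\kappa q_i^{(2)}\geq 2\kappa(n-1)^2\alpha$, producing case (i); when $\kappa\leq 0$ I multiply the upper bound by $\kappa$ (which reverses the inequality) to obtain $\kappa q_i^{(2)}\geq 2\kappa(n-1)^2a\alpha$, producing case (ii). Adding the estimates for $q_i^{(1)}$ and $\kappa q_i^{(2)}$ in each case and summing over $i$ with weights $x_i^2$ yields exactly the two inequalities of the lemma. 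I do not anticipate any genuine obstacle: the pinching is strong enough that every positive quantity in the decomposition admits a one-sided estimate, and the only place where a little care is needed is the sign flip on $q_i^{(2)}$ when $\kappa\leq 0$.
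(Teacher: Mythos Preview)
Your proof is correct and follows the same overall strategy as the paper: diagonalize $A$, reduce to bounding each eigenvalue $q_i$ of $Q(A)$, split into the $\kappa$-free piece $q_i^{(1)}$ and the $\kappa$-piece $q_i^{(2)}$, and then feed in the pinching bounds. The treatment of $q_i^{(2)}$ is essentially identical to the paper's (the paper writes it as $\kappa[(n-2)(H-h_i)+(n-1)H]$, which is your $(n-1)\lambda_i+(2n-3)s_i$ after regrouping).

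The one genuine difference is in the algebra for $q_i^{(1)}$. The paper rewrites $q_i^{(1)}$ in terms of the symmetric function $S_2=\tfrac12(H^2-|A|^2)$ and then applies the elementary inequality $2h_ih_j\le h_i^2+h_j^2$ to kill a cross term before invoking the pinching. Your factorization $q_i^{(1)}=\lambda_i\bigl(s_i\lambda_i+2s_i^2-p_i\bigr)$ is more direct: it bypasses both the $S_2$ bookkeeping and the AM--GM step, and yields the sharper intermediate bound $(n-1)(2n-1-a^2)\alpha^3$, which you then relax to $2(n-1)(n-a^2)\alpha^3$ using $a\ge 1$. This is a cleaner route to the same endpoint.

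One small remark: when you say the bracket $s_i\lambda_i+2s_i^2-p_i$ is ``manifestly positive,'' you are implicitly using that step to justify replacing the outer factor $\lambda_i$ by $\alpha$. It is indeed positive because all eigenvalues are positive, so $s_i^2=\bigl(\sum_{j\ne i}\lambda_j\bigr)^2\ge\sum_{j\ne i}\lambda_j^2=p_i$, whence $2s_i^2-p_i\ge s_i^2>0$; but it would not hurt to say this explicitly, since without it the substitution $\lambda_i\to\alpha$ on a possibly negative bracket would go the wrong way.
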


\begin{proof}
Let $\{e_1,\ldots,e_n\}$ be an orthonormal base of eigenvectors of the shape operator $A,$ and $h_1,h_2,\ldots,h_n$ be its eigenvalues. Denote by $\displaystyle{S_2=\sum_{i<j}h_ih_j = \frac{1}{2}(H^2 - |A|^2)}$. For $\kappa\geq0,$ we have
\begin{equation}\label{eq.QA}
\begin{split}
\lan Q(A)(e_i),e_i\ran &=2h_i^3 - 3Hh_i^2 + (2H^2-|A|^2)h_i - \kappa(n-2)h_i + \kappa(2n-3)H\\
&=2h_i^3 - 3h_i^2(h_i+H-h_i) + 2(H^2-|A|^2)h_i + |A|^2h_i \\
&\qquad+ \kappa[(n-2)(H-h_i)+(n-1)H]\\
&=(|A|^2-h_i^2)h_i + 2 S_2h_i - 3h_i^2(H-h_i)+ \kappa[(n-2)(H-h_i)+(n-1)H]\\
&=h_i\left[(|A|^2-h_i^2) + 2 S_2 - 3h_i(H-h_i)\right]+\kappa[(n-2)(H-h_i)+(n-1)H]\\
&=h_i\left[\left(\sum_{j=1}^n h_j^2 - h_i^2\right) + 2S_2 - 3\sum_{j=1}^nh_ih_j + 3h_i^2\right]\\
&\qquad+\kappa[(n-2)(H-h_i)+(n-1)H]\\
&=h_i\left[2S_2 + 2h_i^2 + \sum_{j=1}^n h_j^2 - 2\sum_{j=1}^n h_ih_j - \sum_{j=1}^n h_ih_j\right]\\
&\qquad+\kappa[(n-2)(H-h_i)+(n-1)H]\\
&\geq\displaystyle{h_i\left[2S_2 + 2h_i^2 + \sum_{j=1}^n h_j^2 - \sum_{j=1}^n (h_i^2 + h_j^2) - \sum_{j=1}^n h_ih_j\right]}\\
&\qquad+ \kappa[(n-2)(H-h_i)+(n-1)H]\\
&= h_i\left[2S_2 - (n-1)h_i^2 - h_i(H-h_i) \right]\\
& \qquad+ \kappa[(n-2)(H-h_i)+(n-1)H]\\
&\geq2\al[S_2 - (n-1)a^2\al^2] + \kappa[(n-2)(n-1)\al+(n-1)n\al]\\
&\geq2\al[S_2 - (n-1)a^2\al^2] + 2\kappa(n-1)^2\al\\
&\geq2\al(n(n-1)\al^2 - (n-1)a^2\al^2) + 2\kappa(n-1)^2\al\\
&=2(n-1)\al^3(n-a^2)+ 2\kappa(n-1)^2\al
\end{split}
\end{equation}
and if $\kappa<0,$
\begin{equation}\label{eq.QA-2}
\begin{split}
\lan Q(A)(e_i),e_i\ran &\geq 2\al[S_2 - (n-1)a^2\al^2] + 2\kappa(n-1)^2a\al\\
&\geq 2(n-1)\al^3(n-a^2)+ 2\kappa(n-1)^2a\al.
\end{split}
\end{equation}

\end{proof}

Now we are ready to prove Theorem \ref{teo.eigen.L1-intro}.

\begin{proof}[Proof of Theorem \ref{teo.eigen.L1-intro}]
Applying the formula of Theorem \ref{teo.boch-intro} to the operator $L_1,$ and by using Codazzi Equation $h_{jik}=h_{jki},$ we have
\begin{equation}\label{for.boch.L1}
\begin{split}
\dfrac{1}{2}L_1|\n f|^2=&\lan\n f, \n (L_1 f)\ran + \lan P_1(\n f),\n(\Delta f)\ran + 2\sum_{i,j,k=1}^n(Hg_{ij}-h_{ij})f_{jk}f_{ki}\\
&\qquad + 2\ric(\n f,P_1(\n f)) -\lan(\Delta P_1)(\n f),\n f\ran- \sum_{k=1}^n\left(\sum_{i,j=1}^nf_j(Hg_{ij}-h_{ij})f_{ik}\right)_k\\
&\qquad + \sum_{k=1}^n\left(|\n f|^2H_k -\lan\n H,\n f\ran f_k\right)_k.
\end{split}
\end{equation}
Integrating this formula and, by using the divergence theorem and the fact that $L_1$ is divergence free, we have
\begin{equation}\label{for.boch-int.L1}
\begin{split}
0=&\displaystyle{\int_M\lan\n f, \n (L_1 f)\ran dM + \int_M\lan P_1(\n f),\n(\Delta f)\ran dM + 2\int_M\left(\sum_{i,j,k=1}^n(Hg_{ij}-h_{ij})f_{jk}f_{ki}\right)dM} \\
&\displaystyle{ +\int_M[2\ric(\n f,P_1(\n f))dM -\lan(\Delta P_1)(\n f),\n f\ran] dM.}\\
\end{split}
\end{equation}
Let us estimate each of these integrals. The three first integrals in expression (\ref{for.boch-int.L1}) have canonical estimates, as follows.
Since $L_1f=-\mu f$ we have
$$
\int_M \lan \n f, \n (L_1 f)\ran dM = -\mu\int_M |\n f|^2 dM.
$$
By using Divergence Theorem in the expression
\[
\begin{split}
\di(\Delta f P_1(\n f)) =& \Delta f\di(P_1(\n f)) + \lan P_1(\n f),\n(\Delta f)\ran\\
                        =& \Delta f\cdot L_1 f + \lan P_1(\n f),\n(\Delta f)\ran,\\
\end{split}
\]
we obtain
\[
\begin{split}
\displaystyle{\int_M\lan P_1(\n f),\n (\Delta f)\ran dM }=&\displaystyle{-\int_M \Delta f\cdot L_1 f dM }\\
=& \displaystyle{\mu\int_M f\Delta f dM}\\
=&\displaystyle{-\mu\int_M|\n f|^2dM.}
\end{split}
\]
Applying Lemma \ref{lem.new}, p. \pageref{lem.new}, we obtain
$$
2\int_M\left(\sum_{i,j,k=1}^n(Hg_{ij}-h_{ij})f_{jk}f_{ki}\right)dM \geq2\int_M\dfrac{(L_1f)^2}{(n-1)H}dM \geq \dfrac{2\mu^2}{n(n-1)a\al}\int_M f^2 dM,
$$
where we have used that $\tr P_1=(n-1)H.$ To estimate the last integral, we claim that, for $\kappa\geq0,$
\[
\begin{split}
\int_M[2\ric(\n f,P_1(\n f))dM &-\lan(\Delta P_1)(\n f),\n f\ran] dM\\
 &\geq \displaystyle{[2\al^3(n-1)(n-a^2) + 2\kappa\al(n-1)^2 -\sigma]\int_M|\n f|^2dM}\\
\end{split}
\]
and for $\kappa<0,$
\[
\begin{split}
\int_M[2\ric(\n f,P_1(\n f))dM &-\lan(\Delta P_1)(\n f),\n f\ran] dM\\
 &\geq \displaystyle{[2\al^3(n-1)(n-a^2) + 2\kappa a\al(n-1)^2-\sigma]\int_M|\n f|^2dM},\\
\end{split}
\]
where $\displaystyle{\sigma=\max_{(p,v)\in TM}\left(\tr(\hess H)|_{v^\perp}(p)\right)},$ $v^\perp=\{u\in T_pM;\lan u,v\ran=0\}.$
In fact, by using Lemma \ref{lem.proof-1}, we have
\[
\begin{split}
\int_M\left[2\ric(\n f,P_1(\n f)) - \lan(\Delta P_1)(\n f),\n f\ran\right]dM&= \int_M\left[\hess(H)(\n f,\n f) - (\Delta H)|\n f|^2\right]dM\\
& + \int_M\lan Q(A)(\n f),\n f\ran dM.\\
\end{split}
\]
By using Lemma \ref{lem.proof-2}, we have
$$
\int_M \lan Q(A)(\n f), \n f\ran dM\geq[2(n-1)\al^3(n-a^2)+ 2\kappa(n-1)^2\al] \int_M|\n f|^2dM
$$
for $\kappa>0,$ and
$$
\int_M \lan Q(A)(\n f), \n f\ran dM\geq[2(n-1)\al^3(n-a^2)+ 2\kappa(n-1)^2a\al] \int_M|\n f|^2dM
$$
for $\kappa\leq0.$
On the other hand,
\[
\begin{split}
\displaystyle{\int_M(\hess(H)(\n f,\n f) - (\Delta H)|\n f|^2)dM} =&\displaystyle{\int_M\left[\hess(H)\left(\dfrac{\n f}{|\n f|},\dfrac{\n f}{|\n f|}\right) - (\Delta H)\right]|\n f|^2dM}\\
\geq&\displaystyle{-\sigma\int_M|\n f|^2dM.}\\
\end{split}
\]
Replacing these estimates in expression $(\ref{for.boch-int.L1}),$ we obtain, for $\kappa>0,$
\[
\begin{split}
0\geq&\displaystyle{-2\mu\int_M|\n f|^2 dM - \sigma\int_M|\n f|^2 dM+\dfrac{2\mu^2}{n(n-1)a\al}\int_M f^2 dM}\\
&\displaystyle{+ [2(n-1)\al^3(n-a^2)+ 2\kappa(n-1)^2\al]\int_M|\n f|^2 dM},
\end{split}
\]
and an analogous expression for $\kappa\leq0.$
Note that
$$
(n-1)\al|\n f|^2\leq\lan P_1(\n f),\n f\ran\leq(n-1)a\al|\n f|^2.
$$
Since $\frac{1}{2}L_1(f^2)=fL_1f + \lan\n f, P_1(\n f)\ran,$ by using Divergence Theorem, we have
$$
\int_M \lan\n f, P_1(\n f)\ran dM= -\int_M fL_1f dM = \mu\int_M f^2dM.
$$
It implies
$$
\int_M f^2 dM \geq \dfrac{(n-1)\al}{\mu}\int_M|\n f|^2 dM.
$$
Denoting by $C=2(n-1)\al^3(n-a^2)+ 2\kappa(n-1)^2\al$, we have
$$
0\geq (-2\mu-\sigma+C)\int_M|\n f|^2dM + \dfrac{2\mu}{na}\int_M|\n f|^2dM,
$$
i.e.,
$$
-2\mu + \dfrac{2\mu}{na} - \sigma + C\leq0.
$$
Therefore,
$$
\mu\geq\displaystyle{\dfrac{1}{2}\left(\dfrac{na}{na-1}\right)\left[2(n-1)\al^3(n-a^2)+ 2\kappa(n-1)^2\al - \sigma\right],}
$$
for $\kappa>0,$ and
$$
\mu\geq\displaystyle{\dfrac{1}{2}\left(\dfrac{na}{na-1}\right)\left[2(n-1)\al^3(n-a^2)+ 2\kappa(n-1)^2a\al - \sigma\right],}
$$
for $\kappa\leq0.$
Now, consider the case of the canonical immersion of a geodesic sphere $x:\s^n(\alpha)\ria\overline{M}^{n+1}(\kappa).$ In this case we have $A=\alpha I,$ $a=1$ and $L_1 f = n(n-1)\al\Delta f.$ Since $\mu(L_1,M)=n(n-1)\al[\al^2+\kappa]$ then, replacing these data in the estimate, the inequality becomes equality and the estimate is sharp. On the other hand, if the equality holds, the equality case of Lemma $\ref{lem.new},$ p. $\pageref{lem.new}$ implies that $f_{ij} = cg_{ij},$ for some real constant $c,$ and following the proof of Obata Theorem, cf. \cite{obata}, we can conclude that $M$ is a geodesic sphere.
\end{proof}

\section{Appendix}
In this appendix we include the Proposition mentioned in the introduction which gives examples of tensor $\phi$, we refer to \cite{CX} for more related discussions.

\begin{proposition}\label{prop1.1}

Let $M$ be a Riemannian manifold.

\begin{itemize}
\item[(1)] If $M$ has constant scalar curvature and $c$ is a real constant, then the linear operator $S_c:=\ric-cI$ satisfies $\di S_c\equiv0$;

\item[(2)] The \emph{Einstein operator} $E:=\dfrac{1}{2}RI-\ric$ satisfies $\di E=0;$

\item[(3)] If $M$ is an immersed hypersurface in an Einstein manifold, then the Newton transformation $P_1$ satisfies $\di P_1\equiv0;$

\item[(4)] If $M$ is an immersed hypersurface in an space form of constant sectional curvature, then the Newton transformation $P_r$ satisfies $\di P_r\equiv0;$

\item[(5)] The Shouten operator $S$ satisfies $\di S=\n(\tr S);$

\item[(6)] If $M$ is locally conformally flat, then the Newton transformations $T_k(S)=T_k$ associated with $S$, $1\leq k\leq n,$ satisfies $\di T_k(S)\equiv0$.
\end{itemize}
\end{proposition}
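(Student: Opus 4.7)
The six items split naturally into two families: items (1), (2), (5) are driven by the twice-contracted second Bianchi identity $\sum_j \ric_{ij;j}=\tfrac{1}{2}R_{,i}$, while items (3), (4), (6) are driven by a Codazzi-type symmetry of the tensor involved combined with the recursive structure of Newton transformations.

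For (1), note that $\di(cI)=0$ since $c$ is constant and the metric is parallel, so $\di S_c=\di\ric=\tfrac{1}{2}\n R$, which vanishes under the constant scalar curvature hypothesis. Item (2) is immediate: $\di(\tfrac12 R\, I)-\di\ric=\tfrac12\n R-\tfrac12\n R=0$. For (5), direct computation gives $(\di S)_i=\tfrac12 R_{,i}-\tfrac{1}{2(n-1)}R_{,i}=\tfrac{n-2}{2(n-1)}R_{,i}$, while $\tr S=R-\tfrac{n}{2(n-1)}R=\tfrac{n-2}{2(n-1)}R$, so $\di S=\n(\tr S)$ follows by matching coefficients.

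For (3), I would compute $(\di A)_i=\sum_j h_{ij;j}$ using the Codazzi--Mainardi equation $h_{ij;k}-h_{ik;j}=\overline{R}(e_i,e_j,e_k,\nu)$; tracing $j=k$ yields $(\di A)_i=H_{,i}+\overline{\ric}(e_i,\nu)$, whence $\di P_1=\di(HI)-\di A=-\overline{\ric}(\,\cdot\,,\nu)^\top$. This vanishes when $\overline{M}$ is Einstein because then $\overline{\ric}(e_i,\nu)=\lambda\,\overline{g}(e_i,\nu)=0$. Item (4) starts in the same way (a space form is Einstein, so $r=1$ is done), and then proceeds by induction on $r$ via the Newton recurrence $P_r=S_r I-A\circ P_{r-1}$. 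The essential ingredient is that in a space form the Codazzi--Mainardi equation reduces to $h_{ij;k}=h_{ik;j}$, so $A$ is a Codazzi tensor; the inductive computation of $\di(A\circ P_{r-1})$ splits into a term $\sum_{j,k}A_{ik;j}(P_{r-1})_{kj}$, which is rearranged using the Codazzi symmetry, and a term $\sum_k A_{ik}(\di P_{r-1})_k$, which vanishes by the inductive hypothesis; the survivors combine to cancel $\n S_r$.

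Finally, item (6) uses the standard fact that vanishing Weyl tensor together with the contracted second Bianchi identity forces the Schouten tensor to be Codazzi, $S_{ij;k}=S_{ik;j}$. Once this is in hand, the argument of (4) transfers verbatim with $A$ replaced by $S$ and $P_r$ by $T_k(S)$. The main technical obstacle is the inductive Newton-transformation argument in (4) and (6): bookkeeping the indices in the recurrence $P_r=S_r I - A\circ P_{r-1}$ while consistently exploiting the Codazzi symmetry to collapse mixed terms is where all the real work lies; everything else is a direct application of Bianchi-type identities and linearity of the divergence.
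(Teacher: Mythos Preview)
Your proposal is correct and follows essentially the same route as the paper: items (1), (2), (5) are handled via the contracted Bianchi identity $\di(\ric)=\tfrac12\,dR$ exactly as the authors do, and your treatment of (3) via Codazzi--Mainardi is just an unpacking of the identity $\di A=dH$ that the paper quotes from \cite{MR1936088}. For (4) and (6) the paper simply cites \cite{Reilly}, \cite{Rosenberg}, and \cite{MR1738176} rather than writing anything out, but the inductive Newton-recurrence argument you sketch (using that $A$, respectively $S$, is Codazzi and that $\partial\sigma_k/\partial A=T_{k-1}$) is precisely the content of those references, so there is no genuine divergence in method.
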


\begin{proof}

It is well known, cf. \cite{Petersen}, p. $39$, and \cite{yau}, p. $197,$ that
$$
\di (\ric) = \dfrac{1}{2}dR.
$$
If $M$ has constant scalar curvature, then $\di(\ric)=0$, which implies that $\di(S_c)=0,$ since $c$ is constant. Claim (2) follows from
$$
\di E= \di(\ric) - \dfrac{1}{2}\di(RI)=\dfrac{1}{2}dR - \dfrac{1}{2}dR=0.
$$
The proof of claim (3) is simple and follows from well known identity
$$
\di(A)=dH,
$$
which holds for hypersurfaces immersed in an Einstein manifold, (see  \cite{MR1936088}, for a proof). We have
$$
\di (P_1) = \di (HI) - \di (A) = dH-\di(A)=0.
$$
The proof of claim (4) can be found in \cite{Reilly} or \cite{Rosenberg}. To prove claim (5), we can use the identity $\displaystyle{\sum_{j=1}^n\ric_{ijj}=\dfrac{1}{2}R_i},$ to obtain
\begin{equation}\label{eqn4.1}
\begin{split}
\sum_{j=1}^n S_{ijj}&=\sum_{j=1}^n\left(\ric_{ij}-\dfrac{R}{2(n-1)}g_{ij}\right)_j \\
&= \sum_{j=1}^n\ric_{ijj} -\sum_{j=1}^n\dfrac{R_j}{2(n-1)}g_{ij}\\
&=\dfrac{1}{2}R_i - \dfrac{R_i}{2(n-1)}\\
& = \dfrac{n-2}{2(n-1)}R_i \\
&= (\tr S)_i,
\end{split}
\end{equation}
i.e.,
$$
\di S = \n(\tr S).
$$
Claim (6) was proved by Viaclovsky, and can be found in \cite{MR1738176}.
\end{proof}

\begin{bibdiv}
\begin{biblist}

\bib{MR1952173}{article}{
   author={Alencar, Hil{\'a}rio},
   author={do Carmo, Manfredo},
   author={Elbert, Maria Fernanda},
   title={Stability of hypersurfaces with vanishing $r$-mean curvatures in
   Euclidean spaces},
   journal={J. Reine Angew. Math.},
   volume={554},
   date={2003},
   pages={201--216},
   issn={0075-4102},
   review={\MR{1952173 (2003k:53061)}},
   doi={10.1515/crll.2003.006},
}

\bib{MR1933789}{article}{
   author={Alencar, Hil{\'a}rio},
   author={do Carmo, Manfredo},
   author={Santos, Walcy},
   title={A gap theorem for hypersurfaces of the sphere with constant scalar
   curvature one},
   journal={Comment. Math. Helv.},
   volume={77},
   date={2002},
   number={3},
   pages={549--562},
   issn={0010-2571},
   review={\MR{1933789 (2003m:53098)}},
   doi={10.1007/s00014-002-8351-1},
}

\bib{MR2084098}{article}{
   author={Alencar, Hil{\'a}rio},
   author={Rosenberg, Harold},
   author={Santos, Walcy},
   title={On the Gauss map of hypersurfaces with constant scalar curvature
   in spheres},
   journal={Proc. Amer. Math. Soc.},
   volume={132},
   date={2004},
   number={12},
   pages={3731--3739 (electronic)},
   issn={0002-9939},
   review={\MR{2084098 (2005h:53099)}},
   doi={10.1090/S0002-9939-04-07493-3},
}

\bib{MR2653960}{article}{
   author={Alencar, Hil{\'a}rio},
   author={Santos, Walcy},
   author={Zhou, Detang},
   title={Stable hypersurfaces with constant scalar curvature},
   journal={Proc. Amer. Math. Soc.},
   volume={138},
   date={2010},
   number={9},
   pages={3301--3312},
   issn={0002-9939},
   review={\MR{2653960 (2011m:53095)}},
   doi={10.1090/S0002-9939-10-10388-8},
}

\bib{Alexandrov}{article}{
   author={Alexandrov, Alexander,},
   title={Die innere Geometrie der konvexen Flachen},
   journal={in Russian. German translation: Akademie, Berlin},
   date={1955},
}
	
\bib{MR2048223}{article}{
   author={Al{\'{\i}}as, Luis J.},
   author={Malacarne, J. Miguel},
   title={On the first eigenvalue of the linearized operator of the higher
   order mean curvature for closed hypersurfaces in space forms},
   journal={Illinois J. Math.},
   volume={48},
   date={2004},
   number={1},
   pages={219--240},
   issn={0019-2082},
   review={\MR{2048223 (2005b:53093)}},
}

\bib{BC}{article}{
   author={Barbosa, Jo{\~a}o Lucas Marques},
   author={Colares, Ant{\^o}nio Gerv{\'a}sio},
   title={Stability of hypersurfaces with constant $r$-mean curvature},
   journal={Ann. Global Anal. Geom.},
   volume={15},
   date={1997},
   number={3},
   pages={277--297},
   issn={0232-704X},
   review={\MR{1456513 (98h:53091)}},
   doi={10.1023/A:1006514303828},
}

\bib{Besse}{book}{
   author={Besse, Arthur L.},
   title={Einstein manifolds},
   series={Classics in Mathematics},
   note={Reprint of the 1987 edition},
   publisher={Springer-Verlag},
   place={Berlin},
   date={2008},
   pages={xii+516},
   isbn={978-3-540-74120-6},
   review={\MR{2371700 (2008k:53084)}},
}

\bib{Cheng-Q-M}{article}{
   author={Cheng, Qing-Ming},
   title={Compact locally conformally flat Riemannian manifolds},
   journal={Bull. London Math. Soc.},
   volume={33},
   date={2001},
   number={4},
   pages={459--465},
   issn={0024-6093},
   review={\MR{1832558 (2002g:53045)}},
   doi={10.1017/S0024609301008074},
}

\bib{CX}{article}{
   author={Cheng, Xu},

   title={An almost-Schur type lemma for symmetric $(2,0)$ tensors and applications},
   journal={Pacific Jour. of  Math.},
   volume={267},
   date={2014},
   number={},
   pages={325-340},
   issn={0030-8730},
   review={},
doi={}
}

\bib{yau}{article}{
   author={Cheng, Shiu Yuen},
   author={Yau, Shing Tung},
   title={Hypersurfaces with constant scalar curvature},
   journal={Math. Ann.},
   volume={225},
   date={1977},
   number={3},
   pages={195--204},
   issn={0025-5831},
   review={\MR{0431043 (55 \#4045)}},
}

\bib{doCarmo}{article}{
   author={do Carmo, M. P.},
   author={Warner, F. W.},
   title={Rigidity and convexity of hypersurfaces in spheres},
   journal={J. Differential Geometry},
   volume={4},
   date={1970},
   pages={133--144},
   issn={0022-040X},
   review={\MR{0266105 (42 \#1014)}},
}

\bib{MR1936088}{article}{
   author={Elbert, Maria Fernanda},
   title={Constant positive 2-mean curvature hypersurfaces},
   journal={Illinois J. Math.},
   volume={46},
   date={2002},
   number={1},
   pages={247--267},
   issn={0019-2082},
   review={\MR{1936088 (2003g:53103)}},
}

\bib{Gilbarg-Trudinger}{book}{
   author={Gilbarg, David},
   author={Trudinger, Neil S.},
   title={Elliptic partial differential equations of second order},
   series={Classics in Mathematics},
   note={Reprint of the 1998 edition},
   publisher={Springer-Verlag},
   place={Berlin},
   date={2001},
   pages={xiv+517},
   isbn={3-540-41160-7},
   review={\MR{1814364 (2001k:35004)}},
}

\bib{Hadamard}{article}{
   author={Hadamard, Jaques},
   title={ Sur certaines proprits des trajectories en dynamique},
   journal={J. Math. Pures Appl.},
   volume={3},
   date={1897},
   pages={331-387},
}

\bib{lich}{book}{
   author={Lichnerowicz, Andr{\'e}},
   title={G\'eom\'etrie des groupes de transformations},
   language={French},
   publisher={Travaux et Recherches Math\'ematiques, III. Dunod, Paris},
   date={1958},
   pages={ix+193},
   review={\MR{0124009 (23 \#A1329)}},
}

\bib{Ros}{article}{
   author={Montiel, Sebasti{\'a}n},
   author={Ros, Antonio},
   title={Compact hypersurfaces: the Alexandrov theorem for higher order
   mean curvatures},
   conference={
      title={Differential geometry},
   },
   book={
      series={Pitman Monogr. Surveys Pure Appl. Math.},
      volume={52},
      publisher={Longman Sci. Tech.},
      place={Harlow},
   },
   date={1991},
   pages={279--296},
   review={\MR{1173047 (93h:53062)}},
}

\bib{obata}{article}{
   author={Obata, Morio},
   title={Certain conditions for a Riemannian manifold to be isometric with
   a sphere},
   journal={J. Math. Soc. Japan},
   volume={14},
   date={1962},
   pages={333--340},
   issn={0025-5645},
   review={\MR{0142086 (25 \#5479)}},
}
		
\bib{Petersen}{book}{
   author={Petersen, Peter},
   title={Riemannian geometry},
   series={Graduate Texts in Mathematics},
   volume={171},
   edition={2},
   publisher={Springer},
   place={New York},
   date={2006},
   pages={xvi+401},
   isbn={978-0387-29246-5},
   isbn={0-387-29246-2},
   review={\MR{2243772 (2007a:53001)}},
}

\bib{Reilly}{article}{
   author={Reilly, Robert C.},
   title={Variational properties of functions of the mean curvatures for
   hypersurfaces in space forms},
   journal={J. Differential Geometry},
   volume={8},
   date={1973},
   pages={465--477},
   issn={0022-040X},
   review={\MR{0341351 (49 \#6102)}},
}

\bib{Rosenberg}{article}{
   author={Rosenberg, Harold},
   title={Hypersurfaces of constant curvature in space forms},
   journal={Bull. Sci. Math.},
   volume={117},
   date={1993},
   number={2},
   pages={211--239},
   issn={0007-4497},
   review={\MR{1216008 (94b:53097)}},
}

\bib{SSY}{article}{
   author={Schoen, R.},
   author={Simon, L.},
   author={Yau, S. T.},
   title={Curvature estimates for minimal hypersurfaces},
   journal={Acta Math.},
   volume={134},
   date={1975},
   number={3-4},
   pages={275--288},
   issn={0001-5962},
   review={\MR{0423263 (54 \#11243)}},
}

\bib{MR1738176}{article}{
   author={Viaclovsky, Jeff A.},
   title={Conformal geometry, contact geometry, and the calculus of
   variations},
   journal={Duke Math. J.},
   volume={101},
   date={2000},
   number={2},
   pages={283--316},
   issn={0012-7094},
   review={\MR{1738176 (2001b:53038)}},
   doi={10.1215/S0012-7094-00-10127-5},
}

\bib{Voss}{article}{
   author={Voss, K.},
   title={Einige differentialgeometrische Kongruenzs\"atze f\"ur
   geschlossene Fl\"achen und Hyperfl\"achen},
   language={German},
   journal={Math. Ann.},
   volume={131},
   date={1956},
   pages={180--218},
   issn={0025-5831},
   review={\MR{0080327 (18,229f)}},
}
	
\end{biblist}
\end{bibdiv}

\begin{tabular}{lccl}
						Hil\'ario Alencar&\ \ \ \ \ \ \ \ \ \ \ \ \ \ \ \ \ \ \ && Greg\'orio Silva Neto\\
						Instituto de Matem\'atica&&&Instituto de Matem\'atica\\
						Universidade Federal de Alagoas&&& Universidade Federal de Alagoas\\
						57072-900, Macei\'o, Brazil&&&57072-900, Macei\'o, Brazil\\
	 					hilario@mat.ufal.br&&&gregorio@im.ufal.br\\
						&&&\\
						Detang Zhou&&&\\
						Instituto de Matem\'atica e Estat\'\i stica&&&\\
						Universidade Federal Fluminense&&&\\
						24020-140, Niter\'oi, Brazil&&&\\
						zhou@impa.br&&&\\
\end{tabular}

\end{document}